\documentclass[a4paper,12pt]{article}
\usepackage[T1]{fontenc}
\usepackage[utf8]{inputenc} 
\usepackage[english]{babel} 
\usepackage{amssymb, amsthm, amsmath} 
\usepackage{mathtools}						
\mathtoolsset{showonlyrefs,showmanualtags}	
\usepackage{tikz}
\usetikzlibrary{external}
\usepackage{pgfplots}
\usepackage{xcolor}
\usepackage[colorlinks,linkcolor=blue,citecolor=blue,urlcolor=blue]{hyperref}
\usepackage{dsfont}
\usepackage{bm}

\pgfplotsset{compat=1.17}
\usepackage[justification = centering]{caption}
\usepackage{comment}

\newcommand\G{g}
\newcommand\U{\mathcal{U}}
\newcommand\R{\mathbb{R}}
\newcommand\Rn{\mathbb{R}^n}
\newcommand\Rm{\mathbb{R}^k}
\newcommand\Sym{\textnormal{Sym}_+(\Rn)}

\newcommand\E{\mathbb{E}}
\newcommand\Sf{\mathcal{S}}
\newcommand\I{\mathcal{I}}
\DeclareMathAlphabet{\mathpzc}{OT1}{pzc}{m}{it}
\newcommand\f{\mathpzc{f}}
\newcommand\mLin{m}
\newcommand\PLin{P}
\newcommand\mTr{\bm{m}}
\newcommand\PTr{\bm{P}}

\newcommand\m{\mu}
\newcommand\hx{m}

\newcommand\Vect[1]{\mathrm{span}\left\lbrace #1\right\rbrace}
\newcommand\vecdef[2]{\begin{pmatrix} #1 \\ #2 \end{pmatrix}}

\theoremstyle{plain}
\newtheorem{theo}{Theorem}
\newtheorem{prop}[theo]{Proposition}
\newtheorem{lem}[theo]{Lemma}
\newtheorem{coro}[theo]{Corollary}

\newtheorem{assu}{Assumption}
\theoremstyle{definition}
\newtheorem{prob}{Problem}
\newtheorem{rem}{Remark}

\usepackage{todonotes}  

\reversemarginpar

\title{Statistical Linearization for Robust Motion Planning }

\author{C. Leparoux\thanks{UMA, ENSTA Paris, Institut Polytechnique de Paris, 91120 Palaiseau, France; and DTIS, ONERA, Université Paris-Saclay, 91123 Palaiseau, France. \textit{E-mail}: {\tt clara.leparoux@ensta-paris.fr}.} \and R. Bonalli\thanks{Laboratoire des Signaux et Syst\`emes, Université Paris-Saclay, CNRS, CentraleSupélec, Bât. Bréguet, 3 Rue Joliot Curie, 91190 Gif-sur-Yvette, France. \textit{E-mail}: {\tt riccardo.bonalli@cnrs.fr}.} \and  B. H\'eriss\'e\thanks{DTIS, ONERA, Université Paris-Saclay, F-91123 Palaiseau, France. \textit{E-mail}: {\tt bruno.herisse@onera.fr}.} \and F. Jean\thanks{UMA, ENSTA Paris,
        Institut Polytechnique de Paris, 91120 Palaiseau, France. \textit{E-mail}: {\tt frederic.jean@ensta-paris.fr}.}}
\begin{document}
\maketitle

\begin{abstract}
The goal of robust motion planning consists of designing open-loop controls which optimally steer a system to a specific target region while mitigating uncertainties and disturbances which affect the dynamics. Recently, stochastic optimal control has enabled particularly accurate formulations of the problem. Nevertheless, despite interesting progresses, these problem formulations still require expensive numerical computations. In this paper, we start bridging this gap by leveraging statistical linearization. Specifically, through statistical linearization we reformulate the robust motion planning problem as a simpler deterministic optimal control problem subject to additional constraints. We rigorously justify our method by providing estimates of the approximation error, as well as some controllability results for the new constrained deterministic formulation. Finally, we apply our method to the powered descent of a space vehicle, showcasing the consistency and efficiency of our approach through numerical experiments.
\end{abstract}

\section{Introduction}

Motion planning is a powerful tool for motion design, with important applications in engineering and biology. Its main objective consists of computing an open-loop control which steers a given system to some desired target, while possibly optimizing performance criteria, e.g., minimizing effort. Specifically, motion planning becomes essential when no feedback-based control strategies are available, for instance because either no measurements are available or some states are not observable, as it happens in particular in the context of fast biological movements \cite{Berret20201}. In addition, motion planning is particularly beneficial to compute reference strategies low-level controllers track at online later stages \cite{Malyuta2021a}, an approach which is of common use in robotics and aerospace \cite{ganet2019, Scharf2017}. 
In all the aforementioned tasks, uncertainties ranging from measurement errors to unknown parameters or external perturbations may hinder the reliability of the computed control strategies. Methods taking these uncertainties into account are referred to as robust motion planning.

The existing robust motion planning methods can be categorized into three groups. In the first group, the so-called robust set methods tackle uncertainty by representing the states of the system through sets which contain all the possible outcomes. In particular, these paradigms build on interval analysis \cite{Vehi2002}, and they have been successfully applied in both robotics \cite{Pepy2009} and aerospace \cite{Cheng2019, Bertin2021}. Similarly, we find works which leverage positively invariant sets to generate safe trajectories \cite{Blanchini1999, Berntorp2017, Danielson2020}. The common drawback of this class of methods is that they generally produce either conservative solutions or computationally expensive algorithms \cite{Blanchini1999}. The second group of works includes methods that reduce the sensitivity with respect to uncertain parameters of the computed control strategies for specific metrics. For instance, \cite{Rustem1994, Darlington2000} devise algorithms which minimize the sensitivity of the performance criteria which are to optimize. Alternatively, the algorithm in \cite{Plooij2015} minimizes the sensitivity of the final state when controlling a robotic arm. Finally, \cite{Seywald2019} models the dynamics of the sensitivity and minimizes the latter via an optimal control approach. Note that methods designed to treat uncertain parameters are surveyed in \cite{Nagy2004}.
These first two classes of methods are well suited to handle parameter uncertainties, but random perturbations of the dynamics are better addressed by the third and last group of works which leverage the so called stochastic methods. More specifically, stochastic methods ensure robustness with respect to uncertain outcomes by reducing the corresponding covariance. Notably, \cite{Sain1966} has first provided the theoretical solution to the problem of minimizing the variance of the cost of an LQ problem under disturbances on the inputs. On the other hand, \cite{Hotz1985} and \cite{Chen2015} propose analysis for exact covariance steering. Finally, \cite{Censi2008} achieves covariance minimization through penalization of the covariance of the final state within the cost, with the help of information matrices. Nevertheless, despite their accuracy, stochastic methods generally suffer from expensive numerical computations (except maybe for the linear quadratic case). Moreover, most of the existing methods have been tailored to specific applications, therefore we may claim there is no systematic methodology for robust motion planning using the stochastic modeling.

In this paper, we propose a paradigm for robust motion planning which methodologically belongs to the aforementioned third group of works, and which in particular leverages stochastic differential equations to model uncertainty essentially along two main steps. First, as presented in \cite{Berret20202}, we model the motion planning problem through a stochastic open-loop optimal control problem where the state covariance is penalized in the cost to ensure robustness. Then, we approximate this latter stochastic formulation through an appropriate deterministic optimal control problem whose state variables are the first two moments of the original stochastic state, i.e., its mean and covariance. These mean and covariance are efficiently computed thanks to statistical linearization, which essentially boils down to approximating the distribution of the original stochastic state through a Gaussian distribution. Statistical linearization methods have been successfully used for decades \cite{socha2007, Elishakoff2016}, especially for applications in mechanical \cite{roberts2003}. In addition, \cite{Lambert2022} recently showed statistical linearization may be beneficially leveraged for variational inference. Despite its sound numerical performance, the theoretical well-posedness of statistical linearization still remains an open question. Specifically, although accessibility properties of statistical linearization have already been preliminary studied \cite{bonalli2022}, thus motivating its use for approximate covariance steering, the fidelity of approximations stemming from statistical linearization still requires investigation. In this paper, we start bridging this gap by computing estimates of the approximation error which is generated by statistical linearization. We additionally study the controllability of statistical linearization when accuracy constraints are imposed.

To showcase the validity of our theoretical findings, we solve a robust motion planning problem for the powered descent of a space vehicle. In the recent literature, the resulting optimal control problem has almost exclusively been studied in deterministic settings, i.e., uncertainties have not so far been considered. For this deterministic setting, theoretical analysis shows that the optimal control has generally a Max-Min-Max form \cite{Leparoux2022c, Lu2018, Gazzola2021}, a control law which has been shown to suffer from lack of robustness with respect to uncertainties and disturbances \cite{Ridderhof2019}. Therefore, infusing robustness with respect to uncertainty in the aforementioned optimal control problem formulation is crucial to ensure reliability in applications which are subject to high performance criteria. To the best of our knowledge, one of the first work addressing uncertainty for the powered descent of a space vehicle is \cite{Shen2010}, in which the authors make use of multi-objective optimization to minimize fuel consumption and state sensitivities at the same time. This strategy unfortunately increases the number of state variables by the square of the dimension of the state, therefore hindering computational performance. To reduce computational burden, in the more recent work \cite{Ridderhof2019} the first two moments of the state variables are controlled via two separated deterministic problems: a mean steering problem and a covariance steering problem. Nevertheless, the aforementioned robust motion planning methods lack justification of well-posedness, as well as analysis of the accuracy of the corresponding approximations. We fill this gap by applying our robust motion planning method for the powered descent of a space vehicle which explicitly includes modeling of environmental and system uncertainties, such as aerodynamic effects, parameter uncertainties, and measurement errors. We study the accessibility of the statistically linearized dynamics under both open-loop and partial feedback control laws, and we propose modelization for actuator limits in a stochastic setting. Finally, numerical results are provided to illustrate the consistency of the approach.

This paper is organized as follows. In Section 2, we formulate the robust motion planning problem and propose an approach which leverage statistical linearization to simplify it. Then, in Section 3 we compute estimates for the error induced by statistical linearization and we study the controllability of the approximated robust motion planning problem under new feasibility constraints which make statistical linearization well-posed. Finally, in Section 4 we apply our method to the powered descent of a space vehicle, providing numerical results which sustain our theoretical findings.

\section{Modelling robustness via stochastic open-loop optimal control} \label{se:robustMP}

As we mentioned in the introduction, we start by modeling a motion planning problem via a deterministic control system 
\begin{equation}\label{eq:dyn} 
\dot x = f(x,u), 
\end{equation}
where $ x \in \R^n$ is the state variable and $ u \in \mathcal{U} \subset \R^k$ is the control variable. We aim finding an open-loop control law $u(t)$, $t \in [0,t_f]$, which steers the system from an initial state $x^0$ to a target set $\Sf \subset \R^n$, while minimizing a certain cost. The motion planning problem then amounts to the following optimal control problem (note that the final time $t_f$ below may be free or fixed depending on the considered problem).

\begin{prob}[Deterministic motion planning] \label{prob1}
\begin{equation}
\min  C(u)=\psi(x(t_f)) + \int_0^{t_f} L(x(t),u(t)) dt
\end{equation}
among the controls $u \in L^2([0,t_f],\mathcal{U})$ such that the solution $x$ of
\begin{equation}
			\dot x(t)=f(x(t),u(t))  \ \hbox{a.e.\ on } [0,t_f],  \qquad
			x(0) =  x^0, 			
\end{equation}
satisfies $x(t_f) \in \Sf$.
\end{prob}

At this step, let us further assume that the system is affected by uncertain perturbations. We consider two kind of perturbations here: 1) uncertainties on the initial state and 2) noise on the dynamics. We model the first uncertainties by taking a random variable $x^0$ as initial state; the second ones are modelled by replacing the initial control system~\eqref{eq:dyn} with an Itô-type stochastic control system
\begin{equation} \label{eq:sde}
    dx_t = f(x_t,u(t)) dt + \G(x_t,u(t)) dW_t,
\end{equation} 
where $W_t$ is a $d$-dimensional Wiener process and $\G(x,u)$ is a dispersion matrix.
The quantities $x(t_f)$ and $C(u)$ are now random variables, and hence the corresponding terminal conditions and cost should be expressed via appropriate expectations.

In this setting the sample trajectories are dispersed around the mean trajectory, and the dispersion may increase along these trajectories. Consequently the final states of sample trajectories may lie far from the target set $\mathcal{S}$. It is then natural to seek robustness by penalizing the norm of the covariance $\PTr(t)$ of the state $x_t$ in the cost. 
For this, let us introduce two non-negative symmetric matrices $\bar{Q}_f$ and $\bar{Q}$ which quantify penalization of the final covariance and of the covariance along trajectories, respectively. We thus obtain a first way of constructing a robust open-loop control strategy as solution to the following stochastic open-loop optimal control problem.

\begin{prob}[Robust motion planning by stochastic optimal control]
\label{pb:sto}
\begin{equation}
\min J(u)= \E[C(u)] + \mathrm{tr} (\bar{Q}_f\PTr(t_f)) + \int_0^{t_f} \mathrm{tr} (\bar{Q} \PTr(t)) dt
\end{equation}
among the controls $u \in L^2([0,t_f],\mathcal{U})$ such that the solution $x_t$ of
\begin{equation}
	\begin{array}{l}
			dx_t = f(x_t,u(t)) dt + \G(x_t,u(t)) dW_t,  \quad t \in [0,t_f], \\[2mm]
			x_0 =  x^0 \ \hbox{(random variable)},		
	\end{array}										
\end{equation}
satisfies $\E[x_{t_f}]  \in \Sf$.
\end{prob}

Solving such a stochastic optimal control problem is computationally difficult and few theoretical and numerical methods exist. However since mean and covariance play a major role in the above problem, we can adopt the approach presented in~\cite{Berret20201}, which is based on statistical linearization.

The main idea of this approach is to approximate the mean $\mTr(t)$ and covariance $\PTr(t)$ of the process $x_t$  by the solution $(\mLin,\PLin)$ of the following deterministic control system, which is called \emph{statistical linearization} of~\eqref{eq:sde}:
\begin{equation} 
\begin{cases}
\dot{\hx} &= f(\hx,u)\\
\dot{\PLin} &= D_xf(\hx,u) \PLin + \PLin D_xf(\hx,u)^{\top} + \G(\hx,u) \G(\hx,u)^{\top}.
\end{cases}
\end{equation}
Importantly, the method requires that the deterministic cost $C$ in Problem \ref{prob1} has a quadratic dependence with respect to the state variables, so from now on we assume that the following property holds.
\begin{itemize}
    \item[(H)] The terminal cost $\psi$ and the infinitesimal cost $L$ in Problem \ref{prob1} are quadratic functions (non necessarily homogeneous) of $x$. 
\end{itemize}
Assumption (H) guarantees that the expectation of the cost $C$ in Problem \ref{prob1} depends on $\mTr$ and $\PTr$ uniquely. Indeed, by denoting with $Q_{\psi}$ and $Q_L(u)$ the symmetric matrices which represent the homogeneous quadratic parts of $\psi$ and $L$ respectively, there holds
\begin{multline}
  \E[C(u)] =  \psi(\mTr(t_f)) + \mathrm{tr} (Q_{\psi} \PTr(t_f)) \\ + \int_0^{t_f} \left( L(\mTr(t),u(t)) + \mathrm{tr} (Q_L (u(t))\PTr(t)) \right) dt.
\end{multline}
Setting $Q_f=\bar{Q}_f + Q_{\psi}$ and $Q(u)=\bar{Q} + Q_L(u)$, we finally obtain that
\begin{equation}
  J(u) =  \psi(\mTr(t_f)) + \mathrm{tr} (Q_f \PTr(t_f)) + \int_0^{t_f} \left( L(\mTr(t),u(t)) + \mathrm{tr} (Q (u(t))\PTr(t)) \right) dt.
\end{equation}
Therefore, we propose to compute an open-loop control for the robust motion planning Problem \ref{pb:sto} by rather solving the following deterministic optimal control problem (below, $\mTr^0$ and $\PTr^0$ denote the mean and covariance of the random variable $x^0$, respectively).

\begin{prob}[Robust motion planning by statistical linearization]
\label{pb:stat_lin}
\begin{equation}
\min  J_{\mathit{lin}}(u)= \psi(\mLin(t_f)) + \mathrm{tr} (Q_f \PLin(t_f)) + \int_0^{t_f} \left( L(\mLin(t),u(t)) + \mathrm{tr} (Q (u(t))\PLin(t)) \right) dt
\end{equation}
among the controls $u \in L^2([0,t_f],\mathcal{U})$ such that the solution $(\mLin,\PLin)$ of 
\begin{equation} \label{eq:syslin}
\begin{array}{l}
\left\{ \begin{array}{ll}
\dot{\hx} &= f(\hx,u),\\[2mm]
\dot{\PLin} &= D_xf(\hx,u) \PLin + \PLin D_xf(\hx,u)^{\top} + \G(\hx,u) \G(\hx,u)^{\top},
\end{array} \right. \\[6mm]
\mLin(0) = \mTr^0, \quad \PLin(0)=\PTr^0,
\end{array}
\end{equation}
satisfies $\mLin(t_f)  \in \Sf$.
\end{prob}

\begin{rem} 
The formulation in Problem \ref{pb:stat_lin} has two additional notable features: 
\begin{itemize}
    \item the mean variable (equation in $\mLin$) and the covariance variable (equation in $\PLin$), which measures the dispersion of the process $x_t$, are decoupled; 
    \item the resulting trajectory of the mean $\mLin$ corresponds to the planned trajectory for the deterministic system.
\end{itemize} 
\end{rem}

Despite its simplicity, the formulation in Problem \ref{pb:stat_lin} raises several questions, both from theoretical and practical point of view. First, in which sense is the solution to Problem \ref{pb:stat_lin} an approximation of the solution to Problem \ref{pb:sto}?
We answer this question in Section~\ref{se:justification}.

Second, is this approach relevant for robustness, in particular is the use of statistical linearization sufficient to reduce the covariance? The latter question  boils down to study the controllability properties of the statistical linearization, i.e., the subject of our paper \cite{bonalli2022}. From a practical point of view, we show the efficiency which is offered by solving Problem \ref{pb:stat_lin} by studying the example of the landing of a reusable launcher in Section~\ref{se:launcher}.

\section{Justification of statistical linearization}
\label{se:justification}

In this section, our objective consists of endowing statistical linearization with guarantees of well-posedness, and for this we aim at showing two important properties under mild assumptions. First, we compute estimates which quantify constraints for the approximation error between the trajectories of the stochastic control system \eqref{eq:sde} and those of its statistical linearization \eqref{eq:syslin}. In short, this property ensures statistical linearization represents a well-posed approximation of mean and covariance of a stochastic control system as soon as the variance is forced to take small values. As a second result, we show that statistical linearization \eqref{eq:syslin} is controllable when the aforementioned approximation constraints are considered, in the specific case of control-linear systems, a fairly general class of control systems widely used in application. This latter result ensures that, under additional approximation constraints, Problem \ref{pb:stat_lin} is feasible, justifying the search of an optimal solution, and their solution trajectories are close to the trajectories of the original stochastic control system.

\subsection{Estimates for statistical linearization}

In this section, we leverage the following arguably mild assumption.

\begin{assu} \label{assu:estimate}
The following hold for \eqref{eq:sde}:
\begin{itemize}
    \item the differential of the dynamics $f$ is bounded uniformly-in-state, i.e., there exists a continuous function $\varphi : \R_+\to\R_+$ such that $\| D_xf(x,u) \| \le \varphi(\| u \|)$, for every $(x,u) \in \Rn\times\U$;

    \item the dispersion matrix does not depend on the state variable, i.e., it holds that $\G(x,u) = \G(u)$, for every $(x,u) \in \Rn\times\U$.
\end{itemize}
\end{assu}

Note that, on the one hand the first condition in Assumption \ref{assu:estimate} is standard, in that it is generally required, among other conditions, to ensure existence and uniqueness for solutions to \eqref{eq:sde}. Also, this condition trivially holds for vector field $f$ with compact support (in which case the function $\varphi$ is just a constant), a property often implicitly assumed in the models. On the other hand, the second condition in Assumption \ref{assu:estimate} is for instance satisfied as soon as uncertainty (specifically, $\G(x_t,u(t)) dW_t$ in \eqref{eq:sde}) originates from some uniformly recurrent noise, such as system measurement errors, uncertain weather conditions, etc.

Before establishing estimates of the error between the trajectories of the stochastic control system and those of its statistical linearization, we note that under Assumption \ref{assu:estimate} the original stochastic control system takes the form
\begin{equation} \label{eq:trueSDE}
    \mathrm{d}x_t = f(x_t,u(t)) \, \mathrm{d}t + \G(u(t)) \, \mathrm{d}W_t ,
\end{equation}
and the statistical linearization of \eqref{eq:trueSDE} now writes
\begin{equation} \label{eq:statLinSDE}
    \begin{cases}
    \dot{\mLin}(t) = f(\mLin(t),u(t)) \\
    \dot{\PLin}(t) = D_x f(\mLin(t),u(t)) \PLin(t) + \PLin(t) D_x f(\mLin(t),u(t))^{\top} + \G(u(t)) \G(u(t))^{\top} .
    \end{cases}
\end{equation}


Error estimates between the trajectories of the stochastic control system \eqref{eq:trueSDE} and those of its statistical linearization \eqref{eq:statLinSDE} are given below. Let us first fix some notation. Given a control law $u \in L^2([0,t_f],\U)$ and a random variable $x^0$ with mean and covariance $(\mTr^0,\PTr^0)$, we denote by $(\mTr,\PTr)$ the mean and covariance of the solutions to \eqref{eq:trueSDE} with initial condition $x^0$ and control $u$, whereas $(\mLin,\PLin)$ denotes the trajectories solutions to \eqref{eq:statLinSDE} with initial condition $(\mTr^0,\PTr^0)$ and control $u$.

\begin{prop} \label{prop:estimate}
Assume Assumption \ref{assu:estimate} holds and fix $t_f>0$. Then there exists an increasing function $\alpha: \R^+ \to \R^+$ such that for every $u$ and $x^0$,  the following estimate holds true,
\begin{multline} \label{eq:trueBound}
     \underset{t \in [0,t_f]}{\sup} \ \| \mTr(t) - \mLin(t) \|^{2} + \underset{t \in [0,t_f]}{\sup} \ \| \PTr(t) - \PLin(t) \| \le  \\
     \alpha \left( \int^{t_f}_0 \varphi(\| u(s) \|)  \mathrm{d}s \right) \times   \int^{t_f}_0 \varphi(\| u(s) \|) \| \PLin(s) \| \; \mathrm{d}s,
\end{multline}
provided all quantities are defined in the whole interval $[0,t_f]$.
\end{prop}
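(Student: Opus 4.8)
The plan is to estimate the two error quantities separately, but in a coupled Grönwall-type argument, exploiting the fact that the mean $\mLin$ of the statistical linearization satisfies exactly the same (deterministic) ODE as the true mean $\mTr = \E[x_t]$ — up to the fact that the true mean equation involves $\E[f(x_t,u)]$ rather than $f(\E[x_t],u)$. So the first step is to write the evolution equations for the errors $e_{\mTr}(t) := \mTr(t) - \mLin(t)$ and $e_{\PTr}(t) := \PTr(t) - \PLin(t)$. For the mean, using Itô's formula, $\dot{\mTr} = \E[f(x_t,u)]$, hence $\dot e_{\mTr} = \E[f(x_t,u)] - f(\mLin,u)$. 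I would split this as $\big(\E[f(x_t,u)] - f(\mTr,u)\big) + \big(f(\mTr,u) - f(\mLin,u)\big)$; the second term is bounded by $\varphi(\|u\|)\|e_{\mTr}\|$ via the uniform-in-state bound on $D_xf$, and the first term is a second-order Taylor remainder, bounded (again using the $D_xf$ bound, or rather a mean-value/Taylor estimate along the segment) by something like $\varphi(\|u\|)\,\E\|x_t - \mTr\|$; but $\E\|x_t-\mTr\| \le \sqrt{\mathrm{tr}\,\PTr} \le C\sqrt{\|\PTr\|}$, and $\|\PTr\| \le \|\PLin\| + \|e_{\PTr}\|$. This already shows why the right-hand side of the claimed estimate carries the factor $\int \varphi(\|u\|)\|\PLin\|\,ds$.

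The second step is the covariance error. Here I would use that under Assumption 1 the dispersion matrix is state-independent, so the true covariance satisfies (again by Itô) $\dot{\PTr} = \E\big[D_xf(x_t,u)(x_t-\mTr)(x_t-\mTr)^\top\big] + (\cdot)^\top + \G(u)\G(u)^\top$ — wait, more precisely $\dot{\PTr} = \E[(x_t-\mTr)f(x_t,u)^\top] + \E[f(x_t,u)(x_t-\mTr)^\top] + \G(u)\G(u)^\top$. Comparing with the $\PLin$ equation, the $\G\G^\top$ terms cancel exactly (this is the crucial consequence of the second bullet of Assumption 1), leaving $\dot e_{\PTr}$ as a difference of terms each of which I would expand around $\mTr$: $\E[f(x_t,u)(x_t-\mTr)^\top] = f(\mTr,u)\,\E[(x_t-\mTr)^\top] + \E[D_xf\,(x_t-\mTr)(x_t-\mTr)^\top] + \text{remainder}$, and the first piece vanishes since $\E[x_t-\mTr]=0$. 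So $\dot e_{\PTr} \approx D_xf(\mTr,u)\PTr + \PTr D_xf(\mTr,u)^\top - D_xf(\mLin,u)\PLin - \PLin D_xf(\mLin,u)^\top + (\text{Taylor remainders})$, and I would bound this by $\varphi(\|u\|)\big(\|e_{\PTr}\| + \|e_{\mTr}\|\,\|\PTr\|\big) + (\text{higher-order terms in } \E\|x_t-\mTr\|^3)$. The higher-order moment terms need to be controlled: since $\G$ is state-independent and $D_xf$ is bounded, one gets moment bounds $\E\|x_t-\mTr\|^p \le$ (polynomial in $\int\varphi$) $\times$ (powers of $\sup\|\PLin\|$ or of $\int\|\G\|^2$), which can be folded into the function $\alpha$.

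The third step is to combine: set $E(t) := \|e_{\mTr}(t)\|^2 + \|e_{\PTr}(t)\|$ (matching the left side of the claim — note the square on the mean term and not on the covariance term, which is a hint about which powers appear), derive a differential inequality $\dot E(t) \le a(t) E(t) + b(t)$ with $a(t) \lesssim \varphi(\|u(t)\|)$ (times possibly $\|\PLin(t)\|$ or constants absorbed into $\alpha$) and $b(t) \lesssim \varphi(\|u(t)\|)\|\PLin(t)\|$, and apply Grönwall to get $E(t_f) \le e^{\int a}\int b \le \alpha\big(\int\varphi(\|u\|)\big)\cdot\int\varphi(\|u\|)\|\PLin\|\,ds$. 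The bookkeeping of which error enters linearly versus quadratically — reconciling $\|e_{\mTr}\|^2$ with $\|e_{\PTr}\|$ in a single Grönwall functional — is delicate, and is presumably why the statement squares the mean error: one likely works with $\|e_{\mTr}\|$ first (linear Grönwall) and then with $\|e_{\PTr}\|$, substituting the mean bound, which naturally produces $\|e_{\mTr}\|^2$-scale contributions.

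\medskip

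The main obstacle is the bootstrapping between the two errors together with controlling the true-process moments $\E\|x_t - \mTr\|^p$ for $p=2,3$ by quantities that only involve $\PLin$ (not $\PTr$). One cannot simply bound $\E\|x_t-\mTr\|^2 = \mathrm{tr}\,\PTr$ by $\mathrm{tr}\,\PLin$ for free — that is exactly the error one is trying to estimate — so the argument must be self-referential: assume a bound on $\|e_{\PTr}\|$ on $[0,t]$, use it to bound $\|\PTr\| \le \|\PLin\| + \|e_{\PTr}\|$ inside the moment estimates, feed this into the differential inequality, and close the loop via Grönwall. Making this circular dependence rigorous — and ensuring the "provided all quantities are defined on $[0,t_f]$" caveat is enough to avoid finite-time blow-up of $\PLin$ (which is genuinely possible since the $\PLin$ equation is only linear in $\PLin$ with coefficients bounded by $\varphi(\|u\|)$, so it does not blow up in finite time given $u\in L^2$ and $\varphi$ continuous, but one must be careful) — is the technical heart of the proof.
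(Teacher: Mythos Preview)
Your overall architecture (Taylor-expand the dynamics, feed into Gr\"onwall, close the loop via $\|\PTr\| \le \|\PLin\| + \|e_{\PTr}\|$) is exactly what the paper does. However, there is a genuine gap in your covariance step as written, and it comes from using one order of Taylor expansion too many.

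You propose to expand $\E[f(x_t,u)(x_t-\mTr)^\top]$ around $\mTr$ so as to obtain $D_xf(\mTr,u)\PTr$ plus a remainder involving $\E\|x_t-\mTr\|^3$, and then to compare $D_xf(\mTr,u)\PTr$ with $D_xf(\mLin,u)\PLin$, producing the cross term $\|e_{\mTr}\|\,\|\PTr\|$. Both moves implicitly require control of $D_x^2 f$ (for the third-moment remainder) or at least Lipschitz continuity of $D_xf$ (for the cross term). Assumption~\ref{assu:estimate} gives neither: it only bounds $\|D_xf(x,u)\|$ by $\varphi(\|u\|)$. So under the stated hypotheses your differential inequality cannot be closed.

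The paper sidesteps this by a cruder but sufficient trick: use only the \emph{first-order} Taylor formula with integral remainder, $f(x_s,u)=f(\mTr,u)+\int_0^1 D_xf\big(\theta x_s+(1-\theta)\mTr,u\big)(x_s-\mTr)\,d\theta$, and then compare the resulting $D_xf\big(\theta x_s+(1-\theta)\mTr,u\big)(x_s-\mTr)(x_s-\mTr)^\top$ \emph{directly} to $D_xf(\mLin,u)\PLin$ by adding and subtracting $D_xf(\mLin,u)(x_s-\mTr)(x_s-\mTr)^\top$. The difference of the two $D_xf$ factors is bounded crudely by $2\varphi(\|u\|)$ (no Lipschitz needed), so everything is controlled by $\varphi(\|u\|)\big(\E\|x_s-\mTr\|^2 + \|\PTr-\PLin\|\big)$. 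No third moments appear, and --- this is the pleasant surprise --- the mean error $\|e_{\mTr}\|$ never enters the covariance inequality at all. Consequently the two Gr\"onwall arguments \emph{decouple}: one first bounds $\sup\|\PTr-\PLin\|$ purely in terms of $\int\varphi\|\PLin\|$, and only then substitutes this into the mean estimate (which does involve $(\mathrm{tr}\,\PTr)^{1/2}$) to obtain the squared-mean bound. The ``main obstacle'' you identified --- the delicate coupling of $\|e_{\mTr}\|^2$ and $\|e_{\PTr}\|$ in a joint functional --- simply does not arise.
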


\begin{rem}
The error on the mean has an exponent 2 for homogeneity reasons, the covariance being homogeneous to the square of the mean.
\end{rem}

\begin{proof}
Since the curves $\mLin$ and $\mTr$ originate from the same initial condition $\mTr^0$, the difference $\mTr(t)-\mLin(t)$ writes as
\begin{equation}
\int_0^t \left(\dot{\mTr}(s) - \dot{\mLin}(s) \right)ds = \int_0^t \left( \mathbb{E}[ f(x_s,u(s)) ] - f(\mLin(s),u(s)) \right)ds.
\end{equation}
 Thanks to Assumption \ref{assu:estimate} and a first-order Taylor development of $f$ with integral rest, for every $t \in [0,t_f]$ we may compute (we implicitly overload the constant $C > 0$ in what follows)
\begin{align*}
    &\| \mTr(t) - \mLin(t) \| \le \int^t_0 \| \mathbb{E}[ f(x_s,u(s)) ] - f(\mLin(s),u(s)) \| \; \mathrm{d}s \\
    &\le \int^t_0 \| f(\mTr(s),u(s)) - f(\mLin(s),u(s)) \| \; \mathrm{d}s \\
    &\quad + \int^t_0 \int^1_0 \left\| \mathbb{E}\Big[ D_x f\big(\theta x_s + (1 - \theta) \mTr(s) , u(s)\big) (x_s - \mTr(s)) \Big] \right\| \; \mathrm{d}\theta \; \mathrm{d}s \\
    &\le C \int^t_0 \varphi(\| u(s) \|) \big( \| \mTr(s) - \mLin(s) \| + \mathbb{E}[ \| x_s - \mTr(s) \| ] \big) \; \mathrm{d}s \\
    &\le C \int^t_0 \varphi(\| u(s) \|) \left( \| \mTr(s) - \mLin(s) \| + \big( \textnormal{tr} \PTr(s) \big)^{\frac{1}{2}} \right) \; \mathrm{d}s ,
\end{align*}
where in the last line we use H\"older's inequality and the fact that, for every $s \in [0,t_f]$, by definition $\mathbb{E}[ \| x_s - \mTr(s) \|^2 ] = \textnormal{tr} \PTr(s)$. Therefore, a routine application of Gr\"onwall's inequality yields
\begin{equation} \label{eq:mBound}
    \underset{t \in [0,t_f]}{\sup} \ \| \mTr(t) - \mLin(t) \| \le \alpha \left( \int^{t_f}_0 \varphi(\| u(s) \|)  \mathrm{d}s \right)   \int^{t_f}_0 \varphi(\| u(s) \|) \big( \textnormal{tr} \PTr(t) \big)^{\frac{1}{2}} \; \mathrm{d}t ,
\end{equation}
where $\alpha$ can be taken as the function $\alpha(s)=C e^{Cs}$ here.

A similar computation may be performed for the covariance. Recall that, as a result of a classical computation using It\^{o}'s formula, there holds
\begin{equation}
\dot{\PTr}(t) = \mathbb{E}\big[ f(x_t,u(t)) (x_t - \mTr(t))^{\top} + (x_t - \mTr(t)) f(x_t,u(t))^{\top} \big] + \G(u(t)) \G(u(t))^{\top} .
\end{equation}
As a consequence, for every $t \in [0,t_f]$ we may compute
\begin{align*}
    &\| \PTr(t) - \PLin(t) \| \le \int^t_0 \left\| \mathbb{E}[f(x_s,u(s)) (x_s - \mTr(s))^{\top}] - D_x f(\mLin(s),u(s)) \PLin(s) \right\| \; \mathrm{d}s \\
    &\hspace{13ex}+ \int^t_0 \left\| \mathbb{E}[(x_s - \mTr(s)) f(x_s,u(s))^{\top}] - \PLin(s) D_x f(\mLin(s),u(s))^{\top} \right\| \; \mathrm{d}s \\
    &= 2 \int^t_0 \bigg\| \int^1_0 \mathbb{E}\Big[ D_x f\big( \theta x_s + (1 - \theta) \mTr(s) , u(s) \big) ( x_s - \mTr(s) ) ( x_s - \mTr(s) )^{\top} \\
    &\quad- D_x f(\mLin(s),u(s)) \PLin(s) \pm D_x f(\mLin(s),u(s)) ( x_s - \mTr(s) ) ( x_s - \mTr(s) )^{\top} \Big] \; \mathrm{d}\theta \bigg\| \; \mathrm{d}s \\
    &\le C \int^t_0 \varphi(\| u(s) \|) \big( \mathbb{E}\left[ \| x_s - \mTr(s) \|^2 \right] + \| \PTr(s) - \PLin(s) \| \big) \; \mathrm{d}s ,
\end{align*}
where in the first equality the symbol $\pm$ means that we added and subtracted the quantity
$$
D_x f(\mLin(s),u(s)) ( x_s - \mTr(s) ) ( x_s - \mTr(s) )^{\top} , \quad s \in [0,t_f] .
$$
Noting that
\begin{equation} \label{eq:trace_norme}
\textnormal{tr} \PTr(s) \le \| \PTr(s) \| \le \| \PTr(s) - \PLin(s) \| + \| \PLin(s) \| , \quad s \in [0,t_f] ,
\end{equation}
a routine application of Gr\"onwall's inequality readily yields  (we implicitly overload the nondecreasing function $\alpha$ in what follows)
\begin{equation} \label{eq:PBound}
    \underset{t \in [0,t_f]}{\sup} \ \| \PTr(t) - \PLin(t) \| \le \alpha \left( \int^{t_f}_0 \varphi(\| u(s) \|)  \mathrm{d}s \right)  \int^{t_f}_0 \varphi(\| u(s) \|) \| \PLin(s) \| \; \mathrm{d}s .
\end{equation}
At this step, we note that from \eqref{eq:mBound} we in particular infer that (we write $\alpha$ instead of $\alpha \left( \int^{t_f}_0 \varphi(\| u(s) \|)  \mathrm{d}s \right)$ and overload this for readability)
\begin{align*}
    &\underset{t \in [0,t_f]}{\sup} \ \| \mTr(t) - \mLin(t) \|^2 \le \alpha \times\left( \int^{t_f}_0 \varphi(\| u(s) \|) \big( \textnormal{tr} \PTr(s) \big)^{\frac{1}{2}} \; \mathrm{d}s \right)^2\\
    &\le \alpha \times \left(\int^{t_f}_0 \varphi(\| u(s) \|) \ \left( \| \PTr(s) - \PLin(s) \|^{\frac{1}{2}} + \| \PLin(s) \|^{\frac{1}{2}} \right) \; \mathrm{d}s \right)^2\qquad \hbox{by \eqref{eq:trace_norme},} \\
    &\le \alpha \times \left( \left( \int^{t_f}_0 \varphi(\| u(s) \|) \| \PLin(s) \| \; \mathrm{d}s \right)^{\frac{1}{2}}+ \int^{t_f}_0 \varphi(\| u(s) \|) \| \PLin(s) \|^{\frac{1}{2}} \; \mathrm{d}s \right)^2 , \\
    &\le \alpha \times \left(  \int^{t_f}_0 \varphi(\| u(s) \|) \| \PLin(s) \| \; \mathrm{d}s + \left(\int^{t_f}_0 \varphi(\| u(s) \|) \| \PLin(s) \|^{\frac{1}{2}} \; \mathrm{d}s \right)^2 \right) .
\end{align*}
By the Cauchy-Schwartz inequality, we get
\begin{equation}
  \left(\int^{t_f}_0 \varphi(\| u(s) \|) \| \PLin(s) \|^{\frac{1}{2}} \mathrm{d}s \right)^2 \leq  \int^{t_f}_0 \varphi(\| u(s) \|)  \mathrm{d}s  \int^{t_f}_0 \varphi(\| u(s) \|) \| \PLin(s) \| \mathrm{d}s  ,
\end{equation}
and we finally obtain
\begin{equation}
\underset{t \in [0,t_f]}{\sup} \ \| \mTr(t) - \mLin(t) \|^2 \le \alpha \times     \int^{t_f}_0 \varphi(\| u(s) \|) \| \PLin(s) \| \; \mathrm{d}s ,
\end{equation}
which combined with \eqref{eq:PBound} yields \eqref{eq:trueBound}, and the conclusion follows.
\end{proof}

Estimate \eqref{eq:trueBound} becomes particularly insightful when controls are bounded. In this setting, the subset $\varphi( \| \U \| ) \subseteq \R_+$ being bounded, Proposition \ref{prop:estimate} essentially states that the unique leverage to minimize the error generated by statistical linearization consists of forcing the $L^1$ norm of $\PLin$ to stay small. The latter property happens to be particularly useful in many applications which range from aerospace and robotics \cite{Ridderhof2019, Seywald2019}, to estimation and learning \cite{Lambert2022}.

\begin{theo} \label{theo:estimate}
Assume Assumption \ref{assu:estimate} holds and the control set $\U$ is bounded. Fix $t_f>0$. 
Then there exists a constant $C > 0$ which depends on $f$, $t_f$, and $\U$ uniquely and for which the following estimate holds true,
\begin{equation} \label{eq:trueBoundFinal}
\underset{t \in [0,t_f]}{\sup} \ \| \mTr(t) - \mLin(t) \|^2 + \underset{t \in [0,t_f]}{\sup} \ \| \PTr(t) - \PLin(t) \|
    \le C  \int^t_0 \| \PLin(s) \| \; \mathrm{d}s ,
\end{equation}
provided all quantities are defined in the whole interval $[0,t_f]$.
\end{theo}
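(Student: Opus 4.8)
The plan is to obtain Theorem~\ref{theo:estimate} as a direct specialization of Proposition~\ref{prop:estimate} to the case of a bounded control set. The only new ingredient is that boundedness of $\U$ allows us to replace the control-dependent quantities $\int_0^{t_f}\varphi(\|u(s)\|)\,\mathrm{d}s$ and $\int_0^{t_f}\varphi(\|u(s)\|)\|\PLin(s)\|\,\mathrm{d}s$ appearing in \eqref{eq:trueBound} by expressions that no longer depend on the particular control $u$, after which the estimate \eqref{eq:trueBoundFinal} is immediate.

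First I would observe that, since $\U\subseteq\Rm$ is bounded, the set $\{\,\|u\|:u\in\U\,\}$ is contained in some interval $[0,R]$ with $R<\infty$ depending only on $\U$. As the function $\varphi$ furnished by Assumption~\ref{assu:estimate} is continuous on $\R_+$, the number $M:=\sup_{0\le r\le R}\varphi(r)$ is finite, and by construction it depends only on $f$ (through $\varphi$) and on $\U$ (through $R$). Consequently, for every admissible control $u\in L^2([0,t_f],\U)$ one has $\varphi(\|u(s)\|)\le M$ for almost every $s\in[0,t_f]$ — the map $s\mapsto\varphi(\|u(s)\|)$ being measurable by continuity of $\varphi$ — whence $\int_0^{t_f}\varphi(\|u(s)\|)\,\mathrm{d}s\le M t_f$ and $\int_0^{t_f}\varphi(\|u(s)\|)\|\PLin(s)\|\,\mathrm{d}s\le M\int_0^{t_f}\|\PLin(s)\|\,\mathrm{d}s$.

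Next I would feed these two bounds into estimate \eqref{eq:trueBound}. Since the function $\alpha$ provided by Proposition~\ref{prop:estimate} is increasing, $\alpha\!\left(\int_0^{t_f}\varphi(\|u(s)\|)\,\mathrm{d}s\right)\le\alpha(M t_f)$, so the right-hand side of \eqref{eq:trueBound} is at most $\alpha(M t_f)\,M\int_0^{t_f}\|\PLin(s)\|\,\mathrm{d}s$. Setting $C:=M\,\alpha(M t_f)$, a constant depending only on $f$, $t_f$, and $\U$, yields \eqref{eq:trueBoundFinal}, under the same standing proviso as in Proposition~\ref{prop:estimate} that all the quantities involved are defined on the whole interval $[0,t_f]$.

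I do not expect any genuine obstacle: the argument is a routine uniformization of the control-dependent constants appearing in Proposition~\ref{prop:estimate}. The only points deserving a line of care are (i) invoking continuity of $\varphi$ together with the boundedness of $\{\,\|u\|:u\in\U\,\}$ to produce the finite uniform bound $M$, and (ii) the monotonicity of $\alpha$, which is precisely what permits replacing the $u$-dependent argument $\int_0^{t_f}\varphi(\|u(s)\|)\,\mathrm{d}s$ by the uniform value $M t_f$.
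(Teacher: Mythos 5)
Your proposal is correct and matches the paper's (implicit) argument exactly: the paper states this theorem as a direct consequence of Proposition~\ref{prop:estimate}, noting only that boundedness of $\U$ makes $\varphi(\|\U\|)$ bounded, which is precisely your uniformization via $M=\sup_{0\le r\le R}\varphi(r)$ and $C=M\,\alpha(Mt_f)$. No gaps; the only cosmetic remark is that the upper limit $t$ in \eqref{eq:trueBoundFinal} should read $t_f$, as you correctly wrote.
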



\subsection{Controllability of Problem \ref{pb:stat_lin} under approximation constraints} \label{sec:contr}

Proposition \ref{prop:estimate} quantitatively estimates the well-posedness of statistical linearization: if one is capable of controlling \eqref{eq:statLinSDE} in such a way that the left-hand side of~\eqref{eq:trueBound} takes small values, then the solutions to the statistical linearization well-approximate, i.e., with respect to second-order moments, the solutions to the original stochastic control system. These latter requirements represent new constraints to which Problem \ref{pb:stat_lin} should be subject, to compute meaningful solutions to Problem \ref{pb:sto}.

However the aforementioned constraints entail controllability issues. Indeed Problem \ref{pb:stat_lin} consists of a minimization problem on the set of $L^2$ controls whose associated trajectory satisfies the terminal condition $\mLin(t_f) \in \Sf$. The fact that this set of controls is non-empty can be guaranteed by usual controllability conditions on the dynamics $f(x,u)$. The question is whether this set of admissible control remains non-empty when adding the new constraints we previously introduced. The aim of this section is to answer this question.

Let us first formalize the setting properly. For this, fix $t_f >0$, and choose a vector $\mLin^0 \in \Rn$ and a non-nnegative symmetric matrix $\PLin^0$. Given a control $u \in L^2([0,t_f],\Rm)$, we denote by $(\mLin_u,\PLin_u)$ the solution to \eqref{eq:syslin} with initial condition $(\mLin^0,\PLin^0)$ and control $u$. Assume Assumption \ref{assu:estimate} is satisfied and let $\alpha$ be the increasing function  stemming from Proposition~\ref{prop:estimate}. For $\varepsilon >0$ we define $\mathbb{U}(\varepsilon, \mLin^0, \PLin^0)$ to be the set of controls $u \in L^2([0,t_f],\Rm)$ such that
\begin{equation}
\label{eq:approx_constraint}
\alpha \left( \int^{t_f}_0 \varphi(\| u(s) \|)  \mathrm{d}s \right) \times  \int^{t_f}_0 \varphi(\| u(s) \|) \| \PLin(s) \| \; \mathrm{d}s  \leq \varepsilon.
\end{equation}

As a direct corollary of Proposition~\ref{prop:estimate}, $\mathbb{U}(\varepsilon, \mLin^0, \PLin^0)$ is a class of control guaranteeing that the statistical linearization approximates to within $\varepsilon$ the solutions to the original stochastic control system.

\begin{coro}
Assume Assumption \ref{assu:estimate} holds and consider a random variable $x^0$ with mean and covariance equal to  $(\mLin^0,\PLin^0)$ respectively. Given a control $u \in L^2([0,t_f],\Rm)$,  we denote by $(\mTr_u,\PTr_u)$ the mean and the covariance of the solution to \eqref{eq:sde} with initial condition $x^0$ and control $u$, respectively. Then, for any $u \in \mathbb{U}(\varepsilon, \mLin^0, \PLin^0)$ there holds
\begin{equation}
\underset{t \in [0,t_f]}{\sup} \ \| \mTr_{u}(t) - \mLin_{u}(t) \|^2 + \underset{t \in [0,t_f]}{\sup} \ \| \PTr_{u}(t) - \PLin_{u}(t) \|  \leq \varepsilon .
\end{equation}
\end{coro}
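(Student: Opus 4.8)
The plan is to obtain this statement essentially for free, as a direct consequence of Proposition~\ref{prop:estimate} combined with the very definition of the constraint set $\mathbb{U}(\varepsilon, \mLin^0, \PLin^0)$. First I would record that we are working under Assumption~\ref{assu:estimate}, so that the stochastic control system \eqref{eq:sde} is in fact of the form \eqref{eq:trueSDE} and its statistical linearization \eqref{eq:syslin} is of the form \eqref{eq:statLinSDE}. Consequently Proposition~\ref{prop:estimate} applies verbatim to the data at hand, with $\alpha$ the increasing function it produces (the same one used to define $\mathbb{U}(\varepsilon, \mLin^0, \PLin^0)$).

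Next I would line up the notation. Since the random variable $x^0$ has mean $\mLin^0$ and covariance $\PLin^0$, in the notation of Proposition~\ref{prop:estimate} one has $\mTr^0 = \mLin^0$ and $\PTr^0 = \PLin^0$; the pair $(\mTr_u,\PTr_u)$ is exactly the pair $(\mTr,\PTr)$ of that proposition, i.e.\ the mean and covariance of the solution to \eqref{eq:trueSDE} with initial condition $x^0$ and control $u$, while $(\mLin_u,\PLin_u)$ is exactly $(\mLin,\PLin)$, the solution to \eqref{eq:statLinSDE} with initial condition $(\mTr^0,\PTr^0)$ and control $u$. Applying estimate \eqref{eq:trueBound} then gives
\[
\sup_{t \in [0,t_f]} \| \mTr_u(t) - \mLin_u(t) \|^2 + \sup_{t \in [0,t_f]} \| \PTr_u(t) - \PLin_u(t) \| \le \alpha\!\left( \int_0^{t_f} \varphi(\| u(s) \|)\,\mathrm{d}s \right) \int_0^{t_f} \varphi(\| u(s) \|)\,\| \PLin_u(s) \|\,\mathrm{d}s .
\]

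Finally I would invoke the hypothesis $u \in \mathbb{U}(\varepsilon, \mLin^0, \PLin^0)$: by definition of this set, $u$ satisfies the constraint \eqref{eq:approx_constraint}, which is precisely the assertion that the right-hand side of the displayed inequality is bounded above by $\varepsilon$. Chaining the two bounds yields the claim. There is essentially no obstacle in this argument; the only point deserving a remark is the implicit requirement in Proposition~\ref{prop:estimate} that all quantities be defined on the whole of $[0,t_f]$. This is automatic here: membership of $u$ in $\mathbb{U}(\varepsilon, \mLin^0, \PLin^0)$ already presupposes that $(\mLin_u,\PLin_u)$ exists on $[0,t_f]$ (otherwise \eqref{eq:approx_constraint} is meaningless), and finiteness on $[0,t_f]$ of the first two moments of the solution to \eqref{eq:sde} follows from standard moment estimates for Itô SDEs under Assumption~\ref{assu:estimate}. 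Hence the statement is, as announced, a direct corollary of Proposition~\ref{prop:estimate}.
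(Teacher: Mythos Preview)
Your proposal is correct and matches the paper's approach exactly: the paper presents this statement as ``a direct corollary of Proposition~\ref{prop:estimate}'' with no further proof, and your argument simply unpacks that direct implication by applying \eqref{eq:trueBound} and invoking the defining constraint \eqref{eq:approx_constraint}.
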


Our initial question is whether for any $\varepsilon >0$ one can find controls $u$ in $\mathbb{U}(\varepsilon, \mLin^0, \PLin^0)$ satisfying  $\mLin_u(t_f) \in \Sf$. We consider the most constrained case where the target is reduced to a point, i.e., $\Sf = \{ \mLin^f\}$. Moreover we restrict ourselves to a more specific setting than the one of Assumption~\ref{assu:estimate}.

\begin{assu} \label{assu:controlAffine}
The following hold for \eqref{eq:sde}:
\begin{enumerate}
    \item the control domain is $\U = \Rm$;

    \item the dynamics $f$ is control-linear, i.e.,
    $$
    f(x,u) = \sum^k_{i=1} u_i f_i(x) , \quad  (x,u) \in \Rn\times\Rm ,
    $$
    where  $f_i : \Rn \to \Rn$, $i=1,\dots,k$, are smooth vector fields for which there exists $L > 0$ such that $\| D_x f_i(x) \| \le L$ for every $x \in \Rn$, $i=1,\dots,k$;

    \item the vector fields satisfy the Lie bracket generating condition
    \begin{equation} \label{eq:Lie}
        \Rn = \textnormal{Lie}(f_1,\dots,f_k)(x) , \quad \textnormal{for every} \ x \in \Rn ;
    \end{equation}

    \item the dispersion is constant, i.e., $\G(x,u) = \G$, for every $(x,u) \in \Rn\times\Rm$.
\end{enumerate}
\end{assu}

Note that the above assumption implies that Assumption~\ref{assu:estimate} is satisfied with $\varphi(r) = L r$. Assumption \ref{assu:controlAffine} may seem rather restrictive, in particular for what concerns the hypothesis that the dynamics is control-linear and the dispersion is constant. However this setting is encountered in many control applications, often in the case of simplification of more complex problems. Also, the Lie bracket generating condition \eqref{eq:Lie} is \textit{generic}, i.e., it is satisfied by almost every tuple of vector fields $(f_1,\dots,f_k)$.

Under this assumption we show that we have controllability for the variable $\mLin$ in any class $\mathbb{U}(\varepsilon, \mLin^0, \PLin^0)$ provided $\PLin^0$ is small enough.

\begin{prop} \label{prop:controll}
Assume Assumption \ref{assu:controlAffine} holds. For every $\mLin^0, \mLin^f \in \Rn$ and $\varepsilon >0$, there exists a constant $c>0$ such that,  for every non-nnegative symmetric matrix $\PLin^0$ with $\|\PLin^0\| \leq c$, there exists a control $u \in \mathbb{U}(\varepsilon, \mLin^0, \PLin^0)$ satisfying
\begin{equation} \label{eq:appConstr}
    \mLin_{u}(t_f) = \mLin^f .
\end{equation}
\end{prop}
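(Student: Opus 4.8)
The strategy is to first achieve exact controllability for $\mLin$ ignoring the constraint, using a control that we can rescale, and then to show that on a suitable family of controls the left-hand side of the constraint \eqref{eq:approx_constraint} can be made arbitrarily small by shrinking $\PLin^0$. The key structural observation I would exploit is the scaling behavior of the control-linear system under time reparametrization: if $u$ steers $\mLin$ from $\mLin^0$ to $\mLin^f$ on $[0,t_f]$, then for $\lambda>0$ the rescaled control $u_\lambda(t) := \lambda\, u(\lambda t)$ on $[0,t_f/\lambda]$, followed by a constant-time extension or a further reparametrization back to $[0,t_f]$, traces the same geometric curve for $\mLin$ (since $f$ is homogeneous of degree one in $u$, the trajectory of $\mLin$ depends only on the "oriented path" in control space up to reparametrization). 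Concretely, I would fix the final time $t_f$ and instead slow the control down: replace $u$ by $u_\lambda(t) := \lambda u(t)$ run for a shorter effective time, or — cleaner — keep $t_f$ fixed and note that $\int_0^{t_f}\varphi(\|u_\lambda(s)\|)\,ds = \lambda L \int_0^{t_f}\|u(s)\|\,ds$ scales linearly in $\lambda$ while the $\mLin$-trajectory is unchanged only if we simultaneously dilate time. This is the delicate bookkeeping point.

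The cleaner route, which I would actually pursue: Step 1. By the Lie bracket generating condition \eqref{eq:Lie} and classical Chow–Rashevskii controllability for control-linear (driftless) systems with $\U=\Rm$, fix \emph{one} control $\bar u \in L^2([0,t_f],\Rm)$ with $\mLin_{\bar u}(t_f)=\mLin^f$, starting from $\mLin^0$; this trajectory and the number $A := \int_0^{t_f}\varphi(\|\bar u(s)\|)\,ds = L\int_0^{t_f}\|\bar u(s)\|\,ds$ are now fixed, independent of $\PLin^0$. Step 2. Because $f$ is driftless and homogeneous of degree one in $u$, reparametrizing time does not change the set of points reachable nor the curve: for any smooth increasing bijection $\sigma:[0,t_f]\to[0,t_f]$, the control $\tilde u(t) := \sigma'(t)\,\bar u(\sigma(t))$ produces the trajectory $\mLin_{\tilde u}(t) = \mLin_{\bar u}(\sigma(t))$, hence still reaches $\mLin^f$ at $t_f$. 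Crucially $\int_0^{t_f}\varphi(\|\tilde u\|)\,ds$ is \emph{not} invariant — but $\|\PLin_{\tilde u}(s)\|$ along the path can be controlled by the reparametrization too. Actually the simplest observation: the quantity we must bound, $\int_0^{t_f}\varphi(\|u(s)\|)\|\PLin_u(s)\|\,ds$, and the $\PLin$-dynamics in \eqref{eq:syslin} driven by $D_xf(\mLin,u)$ and $\G\G^\top$, are such that with $\bar u$ fixed the covariance $\PLin$ satisfies a linear ODE, so a Grönwall estimate gives $\sup_{[0,t_f]}\|\PLin_{\bar u}(s)\| \le e^{2A}\big(\|\PLin^0\| + t_f\|\G\|^2\big)$. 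The problem: the $t_f\|\G\|^2$ term does not vanish with $\PLin^0$.

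Therefore the real mechanism must be time-dilation after all, and I would do it honestly: fix $\bar u$ on $[0,t_f]$ reaching $\mLin^f$; for $N\in\mathbb N$ define a control on $[0,t_f]$ that runs $\bar u$ compressed into a subinterval of length $t_f/N$ and is zero elsewhere — wait, zero control for a driftless system leaves $\mLin$ frozen, so compressing $\bar u$ into $[0,t_f/N]$ with values $N\bar u(Nt)$ still reaches $\mLin^f$ by time $t_f/N$, then $u\equiv 0$ on $[t_f/N,t_f]$ keeps $\mLin$ at $\mLin^f$. Good: $\mLin_u(t_f)=\mLin^f$ for every $N$. Now $\int_0^{t_f}\varphi(\|u\|)\,ds = L\int_0^{t_f/N}N\|\bar u(Nt)\|\,dt = L\int_0^{t_f}\|\bar u\|\,ds = A$, independent of $N$ — so $\alpha(\cdot)$ stays bounded by $\alpha(A)$. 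Meanwhile $\PLin$: on $[0,t_f/N]$ it grows, driven by $\G\G^\top$ over a short time and by $D_xf$ whose time-integral of the norm is again $\le A$, giving $\sup_{[0,t_f/N]}\|\PLin\| \le e^{2A}(\|\PLin^0\| + (t_f/N)\|\G\|^2)$; on $[t_f/N,t_f]$ with $u=0$, $\PLin$ grows only through $\G\G^\top$: $\|\PLin(s)\| \le \|\PLin(t_f/N)\| + t_f\|\G\|^2$. Then $\int_0^{t_f}\varphi(\|u\|)\|\PLin\|\,ds = L\int_0^{t_f/N}N\|\bar u(Nt)\|\,\|\PLin(t)\|\,dt \le A\cdot e^{2A}(\|\PLin^0\| + (t_f/N)\|\G\|^2)$, since on $[0,t_f/N]$ only, and $\varphi$ is supported there. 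Multiplying by $\alpha(A)$: the constraint value is $\le \alpha(A)\,A\,e^{2A}(\|\PLin^0\| + (t_f/N)\|\G\|^2)$. Choosing $N$ large makes the second term $<\varepsilon/(2\alpha(A)Ae^{2A})$, and then $c := \varepsilon/(2\alpha(A)Ae^{2A})$ handles the first. This is exactly the constant $c>0$ claimed. The main obstacle is precisely this last estimate: arranging that the $\G\G^\top$-contribution to the constraint integral is confined to the vanishing-length interval where the (large) control acts, while the compensating long interval where $\PLin$ keeps growing carries zero control and hence contributes nothing to $\int\varphi(\|u\|)\|\PLin\|$; the homogeneity of degree one of $f$ in $u$ is what keeps $\int\varphi(\|u\|)$ invariant under the compression and is the crux of the argument.
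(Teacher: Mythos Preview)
Your final argument is correct and is essentially the same as the paper's: compress a fixed steering control $\bar u$ into a short initial interval $[0,\eta]$ (your $\eta = t_f/N$), extend by zero, use that the $L^1$-norm of the control---hence $\int\varphi(\|u\|)$---is invariant under this compression because $f$ is control-linear, and observe that the constraint integral $\int\varphi(\|u\|)\|\PLin\|$ is supported only on the short interval where $\PLin$ has not yet grown much from the $\G\G^{\top}$ source. The paper uses the explicit Lyapunov solution formula for $\PLin$ where you use Gr\"onwall, and takes $\bar u$ of constant norm on $[0,1]$, but these are cosmetic differences.
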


\begin{rem}
Note that when the initial condition $x^0$ is deterministic, the initial covariance $\PLin^0$ is zero. The proposition then implies that, for every $\mLin^0, \mLin^f$ and $\varepsilon$, the corresponding set $\mathbb{U}(\varepsilon, \mLin^0, 0)$ contains controls joining $\mLin^0$ to $\mLin^f$. This case is important for applications.
\end{rem}

\begin{proof}
Fix $\mLin^0$, $\mLin^f$. Thanks to Assumption \ref{assu:controlAffine}, and in particular \eqref{eq:Lie}, there exist a constant $C=C (\mLin^0, \mLin^f )> 0$ and a control $\bar u \in L^2([0,1],\Rm)$ such that
$$
\mLin_{\bar u}(0) = \mLin^0 , \quad \mLin_{\bar u}(1) = \mLin^f , \quad \| \bar u(t) \| = C , \ t \in [0,1] .
$$
Therefore, for every $\eta > 0$ we may define the control $u_{\eta} \in L^2([0,t_f],\Rm)$ by
$$
u_{\eta}(t) \triangleq \frac{1}{\eta} \bar u\left( \frac{t}{\eta} \right) \mathds{1}_{[0,\eta]}(t) , \quad t \in [0,t_f] .
$$
It holds that
$$
\| u_{\eta} \|_{L^1} = \| \bar u \|_{L^1} = C ,
$$
and that
$$
\mLin_{u_{\eta}}(t) = \begin{cases}
    \displaystyle \mLin_{\bar u}\left( \frac{t}{\eta} \right) , \quad 0 \le t \le \eta \\
    \mLin^f , \quad \eta \le t \le t_f .
\end{cases}
$$
At this step, for every $u \in L^2([0,t_f],\Rm)$, the matrix $\PLin_u$ being solution of a Lyapunov differential equation may be explicitly computed as
$$
\PLin_u(t) = \Phi_u(t,0) \PLin^0 \Phi_u(t,0)^{\top}  +\int^t_0 \Phi_u(t,s) \G \G^{\top} \Phi_u(t,s)^{\top} \; \mathrm{d}s ,
$$
where the fundamental matrix $\Phi_u(t,s) \in \R^{n \times n}$ is the solution of
$$
\begin{cases}
    \displaystyle \frac{\mathrm{d}}{\mathrm{d}t} \Phi_u(t,s) = \sum^k_{i=1} u_i(t) D_x f_i(\mLin_u(t)) \Phi_u(t,s) \\
    \Phi_u(s,s) = I .
\end{cases}
$$
A straightforward application of Gr\"onwall's inequality yields
$$
\underset{0 \le s \le t \le t_f}{\sup} \| \Phi_u(t,s) \| \le e^{\bar C \| u \|_{L^1}} ,
$$
where the constant $\bar C > 0$ depends on $L$ uniquely by Assumption~\ref{assu:controlAffine}-\textit{2}. In what follows, we will implicitly overload the constant $\bar C$. For every $t \in [0,\eta]$ we may compute
\begin{align*}
\| \PLin_{u_{\eta}}(t) \|   & \le \| \Phi_{u_{\eta}}(t,0) \|^2 \|\PLin^0\| + \int^t_0 \| \G \|^2 \| \Phi_{u_{\eta}}(t,s) \|^2 \; \mathrm{d}s \\
& \le e^{\bar C \| \bar u_{\eta} \|_{L^1}} \left( \|\PLin^0\| + \bar C  t \right) = e^{\bar C C} \left( \|\PLin^0\| + \bar C  t \right),
\end{align*}
from which we finally obtain that (recall that $\varphi(r) = L r$)
\begin{align*}
  & \alpha \left( \int^{t_f}_0 \varphi(\| u_{\eta} \|)  \right) \times   \int^{t_f}_0 \varphi(\| u_{\eta} \|) \| \PLin_{u_{\eta}} \|  \\
&= \alpha(LC)  \int^{\eta}_0 \frac{L}{\eta} \left\| \bar u\left( \frac{t}{\eta} \right) \right\|  \; \| \PLin_{u_{\eta}}(t) \|  \; \mathrm{d}t \\ 
    & \le \mathrm{Const} \times ( \eta +  \|\PLin^0\|).
\end{align*}
The conclusion follows from the arbitrariness of $\eta > 0$.
\end{proof}

Thus we should solve Problem \ref{pb:stat_lin} by optimizing over controls in $\mathbb{U}(\varepsilon, \mLin^0, \PLin^0)$ if we desire to guarantee that the trajectories of this problem consist of $\varepsilon$-approximations of the ones solutions to Problem~\ref{pb:sto}. Proposition \ref{prop:controll} ensures that under the constraint $u \in \mathbb{U}(\varepsilon, \mLin^0, \PLin^0)$ Problem \ref{pb:stat_lin} remains feasible. In practice it is preferable either to treat constraint~\eqref{eq:approx_constraint} via penalization over controls, or simply to check at a post-processing level that the left-hand side of \eqref{eq:approx_constraint} stays small. We use the latter approach in the next section. 



\section{Application: motion planning of the powered descent of a space vehicle}
\label{se:launcher}
In this section, we present an application of our method to the problem of the powered descent of a space vehicle. In particular, we show the interest of our robust motion planning approach to optimize landing trajectories under uncertainties. Landing trajectories are often performed by vertical landing, whose last stage, the powered descent, is particularly challenging and requires high accuracy \cite{Blackmore2016}. To achieve this, the usual strategy is to optimize a reference trajectory in the first place that will be tracked during the powered descent thanks to a feedback control \cite{Yu2017}. Up to now, the trajectory optimization has been done in general in a deterministic optimal control framework, without considering uncertainties. For instance, \cite{Meditch1964} has first introduced the landing trajectory generation problem as a minimal-fuel optimal control problem, and since then, a wide literature has investigated landing problems. Existing results state that the optimal control has generally a Max-Min-Max form \cite{Leparoux2022c, Lu2018, Gazzola2021}, meaning that its norm switches at most twice between its inferior and superior bounds. This control is particularly sensitive to uncertainties, hence the interest in developing robust methods for which are subject to high performance criteria. 

\subsection{Framework}
 For the sake of simplicity, we consider a two dimensional formulation only, we refer the reader to \cite{Leparoux2022c} for a discussion on the complete three dimensional model. In this setting, the state $x=(r, v, \m) \in \R^5$ is composed by the position $r=(y,z) \in \R^2$, the velocity $v=(v_y,v_z) \in \R^2$, and the mass $\mu \in \R$, while the control $u=(u_y,u_z)$ represents the two dimensional thrust.

Ignoring disturbances and uncertainties, the dynamics of the system, referred later as the \emph{unperturbed dynamics}, is
\begin{equation}\label{eq:dynVL}
\dot x = f(x, u) =
\begin{pmatrix}
v \\
\frac{T}{\m} u - 
\begin{pmatrix}
  0 \\ g_0
\end{pmatrix}
 \\
-q \Vert u \Vert
\end{pmatrix}
\end{equation}
where the maximal thrust $T$, the mass flow rate $q$ of the engine, and the gravitational acceleration $g_0$ are  positive constants.
Moreover, the thrust is subject to the physical limits of the actuators, which can be modeled by upper and lower bounds on the control norm as follows
\begin{equation}\label{eq:ctenorm} 
0 \leq u_{min} \leq \| u(t) \| \leq u_{max}. 
\end{equation}
The goal of the powered descent is to reach the target $\Sf=\{r=0, v=0\}$ from the initial state $x(0)$. To save the largest amount of fuel in the vehicle tanks, the motion planning problem takes the form of Problem~\ref{prob1} with the following cost function:
$$
C(u) = -\m(t_f).
$$
In addition, the system is subject to at least two types of uncertainties. First, several effects such as aerodynamic forces are usually neglected from the dynamics model for convenience. Therefore, we propose to model these neglected quantities through an additive noise on the acceleration (i.e., the derivative of $v$). As a result, we write the dynamics as
\begin{equation} \label{eq:trueSDE1}
    \mathrm{d}x_t = f(x_t,u(t)) \, \mathrm{d}t + \G \, \mathrm{d}W_t,
\end{equation}
where $W_t$ is a one-dimensional Brownian motion and $\G = ( 0, 0, \sigma_y, \sigma_z, 0)$ is a constant vector, $\sigma_y, \sigma_z$ being nonnegative constants. Second, the initial state $x(0)$ is known only up to measurement errors. We model these errors by considering $x(0)$ as a random variable $x^0$ with normal density $\mathcal{N}(\mTr^0,\PTr^0)$. Finally, we formulate the robust motion planning of the powered descent as a problem in the form of Problem~\ref{pb:sto}. 

\subsection{Direct statistical linearization}
\label{sse:framework}

Following the approach described in Section \ref{se:robustMP}, we solve a problem in the form of Problem~\ref{pb:stat_lin} rather than one in the form of Problem~\ref{pb:sto}. In particular, we look for a solution of the robust powered descent problem as a control law $u$ which minimizes the following problem (where we write $m$ as $m=(m_r,m_v,m_\mu) \in \R^2 \times \R^2 \times \R$).
%
%
%
%

\begin{prob}\label{pb:mpSV}
\[
\min \limits_{u, t_f} \quad J_{lin}(u) = -\mLin_{\mu}(t_f) + \mathrm{tr} (Q_f\PLin(t_f)) + \int_0^{t_f} \mathrm{tr} (Q\PLin(t)) dt
\]
under the constraints
\[
\begin{cases}
	\begin{array}{l}
			(\mLin , \PLin)(\cdot) \ \text{solution of the statistical linearization \eqref{eq:syslin}}, \\
			u(t)  \in \mathcal{U}=\left \lbrace u \in \R^2 \ | \  u_{min} \leq \| u \| \leq u_{max} \right \rbrace \quad \hbox{for a.e.} \ t \in [0, t_f], \\
			(\mLin,\PLin)(0) = (\mTr^0,\PTr^0), \\
			\mLin(t_f) \in \Sf.
	\end{array}
\end{cases}
\]
\end{prob}

Unfortunately, this formulation can not be leveraged because the corresponding statistical linearization is not controllable, in the sense that it does not enjoy accessibility property. In particular, there is no reason for the solution of Problem \ref{pb:mpSV} to perform the powered descent with a small final covariance.

Let us better elucidate this ``non controllability'' property. We introduce first some notations in a general setting. Given a dynamical system of the form~\eqref{eq:dyn} on $\R^n$, denote by $\f = \{ f(x,u) \ : \ u \in \mathcal{U} \}$ the associated family of vector fields, and consider the following linear space of vector fields,
 \begin{equation}\nonumber
\I(\f) = \Vect {f_{1} - f_{2}, f_3 : f_1, f_2 \in \f, f_3 \in \mathcal{D}(\f) },
\end{equation}
where $\mathcal{D}(\f) = \lbrace \left[ f_{1}, ..., [f_{k-1},  f_{k}] \right]: k\geq 2, f_i \in \f \rbrace$. Recall that a dynamical system is said to be \textit{accessible} from $x \in \R^n$, for some $x \in \R^n$, if the set of reachable points from $x$ in fixed time has a non empty interior. We have proven in \cite{bonalli2022} that, 
if for every $\hx \in \R^n$ there holds
 \begin{equation}
 \label{eq:suffcondition2}
    \dim \left\{ \vecdef{f(m)}{Df(m) + Df(m)^{\top}} \ : \ f \in \I(\f) \right\} = n + \frac{n(n+1)}{2},
 \end{equation}
then the statistical linearization~\eqref{eq:syslin} defined by $f(x,u)$ and any dispersion $g$ is accessible from any point in an open and dense subset of $\R^n \times \Sym$.

Note that in the present application this condition is never satisfied.

\begin{lem}
For the dynamic system \eqref{eq:dynVL}, no point $m \in \R^5$ satisfies condition \eqref{eq:suffcondition2}.
\end{lem}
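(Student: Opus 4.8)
The plan is to exhibit one nontrivial linear relation satisfied by every vector $(f(m),\,Df(m)+Df(m)^\top)$ with $f\in\I(\f)$; this immediately forces the dimension in \eqref{eq:suffcondition2} to be at most $n+\frac{n(n+1)}{2}-1=19<20$ at every point $m$ (here $n=5$). The relation comes from the mass equation $\dot\mu=-q\Vert u\Vert$ in \eqref{eq:dynVL}, whose right-hand side does not depend on the state $x$. Concretely, I would show that for every $f\in\I(\f)$ the last (fifth) row of the Jacobian $Df$ vanishes identically, so that in particular $\big(Df(m)+Df(m)^\top\big)_{55}=0$ at every $m$.

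First I would record that, for each $u\in\mathcal U$, the last component of $f(\cdot,u)$ is the constant $-q\Vert u\Vert$; hence the last row of $D_xf(\cdot,u)$ is identically zero. Using the sign convention $[g,h]=Dh\,g-Dg\,h$, I would then prove by induction on the length $k\ge 2$ that every iterated bracket $[f(\cdot,u^1),[\dots,f(\cdot,u^k)]]$ in $\mathcal D(\f)$ has identically vanishing last component: the last component of a bracket $[g,h]$ only involves the last rows of $Dg$ and $Dh$, so it vanishes whenever $g,h\in\f$ (base case), and if $h\in\mathcal D(\f)$ already has vanishing last component then the last row of $Dh$ vanishes and the same computation gives that $[f(\cdot,u),h]$ has vanishing last component (inductive step). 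Finally, any difference $f(\cdot,u^1)-f(\cdot,u^2)$ has the constant last component $-q(\Vert u^1\Vert-\Vert u^2\Vert)$, so the last row of its Jacobian vanishes too.

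Since every $f\in\I(\f)$ is a finite linear combination of such differences and of elements of $\mathcal D(\f)$, its last component is constant; hence $Df(m)$ has zero last row at every point $m$ in the domain $\{\mu\ne 0\}$ where the dynamics is defined, so $(Df(m))_{55}=0$ and therefore $\big(Df(m)+Df(m)^\top\big)_{55}=0$. Consequently the set in \eqref{eq:suffcondition2} is contained in the proper linear subspace $\{(w,S)\in\R^5\times\mathrm{Sym}(\R^5):S_{55}=0\}$, which has dimension $19$, so the dimension in \eqref{eq:suffcondition2} is strictly less than $n+\frac{n(n+1)}{2}=20$ at every point; in particular no $m\in\R^5$ satisfies \eqref{eq:suffcondition2}. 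I do not expect a genuine obstacle here: the argument is purely structural — it uses only the absence of state feedback in the mass dynamics, so it is in fact insensitive to the precise control set $\mathcal U$ — and the only mild care needed is to fix the Lie bracket sign convention and to carry out the induction over nested brackets explicitly.
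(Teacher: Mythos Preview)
Your argument is correct: the observation that the fifth component of every $f\in\I(\f)$ is a constant (zero for iterated brackets, a $u$-dependent constant for differences) is enough to force $(Df+Df^\top)_{55}=0$ identically, hence the span in \eqref{eq:suffcondition2} sits inside a hyperplane of dimension $19<20$. The induction over bracket length is clean and the only delicate point (that the fifth row of $Df_u$ vanishes for every $f_u\in\f$, which seeds both the base case and the inductive step) is handled correctly.

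The paper takes a somewhat different and stronger route. Rather than isolating a single linear relation, it observes that for differences $f_{u_1}-f_{u_2}$ and for brackets $[f_{u_1},f_{u_2}]$ \emph{all} nonconstant components are functions of $\mu$ alone, and propagates this by induction to every $f\in\I(\f)$. This forces $Df(m)$ to have at most its last column nonzero (and, combined with $f_5$ being constant, the $(5,5)$ entry to vanish), so the pair $(f(m),Df(m)+Df(m)^\top)$ actually lives in a subspace of dimension at most $9$, far below $20$. Your approach is more economical and pinpoints the structural reason (no state dependence in the mass equation) with the least effort; the paper's approach yields a much sharper dimension count and hence more information about how badly accessibility fails, though that extra information is not needed for the lemma as stated.
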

\begin{proof}
As the proof consists of a simple computation, below we only detail the main steps. For the dynamic system \eqref{eq:dynVL}, any vector field  $f_u \in \f$ has the form
\begin{equation} f_u(x) = \begin{pmatrix} v_y \\ v_z \\ f_{u,3}(\m)  \\ f_{u,4}(\m) \\ f_{u,5} \end{pmatrix}\end{equation}
where $f_{u,3}(\m) = \frac{T}{\m} u_y$, $f_{u,4}(\m) = \frac{T}{\m}u_z -g_0$ and $f_{u,5} = -q \|u\|$. Thus, for any $u_1, u_2 \in \U$, the  first two components of $f_{u_1}(x) - f_{u_2}(x)$ are zero, as well as the last component of $[f_{u_1}, f_{u_2}](x)$, and their other components are functions of $\mu$ only.
By induction, we deduce that all elements $f \in \I(\f)$ are functions of $\m$, so that 
\begin{equation}
\text{dim}\left(\Vect{\vecdef{f(x)}{Df(x) + Df(x)^{\top}}  \ : \  f \in \I(\f)}\right) \leq 9< n + \frac{n(n+1)}{2},
\end{equation}
and the conclusion follows.
\end{proof}
\begin{rem}
However, in a similar fashion we can prove the unperturbed system is accessible from any point, by showing that the dimension of $\text{Vect} \lbrace f(x) \ :$ $ f \in \I(\f)\rbrace$ is equal to $n$. Indeed, by considering for instance the controls $u_1 = (0,0)$, $u_2 = (0,1)$, $u_3 = (1,0)$ and $u_4 = (\frac{1}{\sqrt{2}}, \frac{1}{\sqrt{2}})$, we obtain that the vectors $f_{u_1} (x) - f_{u_2}(x)$, $f_{u_1}(x) - f_{u_3}(x)$, $f_{u_1}(x) - f_{u_4}(x)$, $ \left[f_{u_1},f_{u_2} \right](x)$, $\left[f_{u_1}, f_{u_3} \right](x)$ are linearly independent, which leads to the conclusion.
\end{rem}
The sufficient accessibility condition is not satisfied. This does not prove that the statistically linearized system \eqref{eq:syslin} is not accessible, since the condition tested is only a sufficient one. However, the lack of accessibility is expected because here the vector fields in $\f$ are analytic vector fields, and in this case the Lie-based accessibility sufficient conditions are merely necessary (see~\cite{Jurdjevic1996}). In a more informal way, notice that the dynamics \eqref{eq:dynVL} is almost linear and the statistical linearizations of linear systems are never accessible. Finally, our numerical solutions of the robust motion planning problem subject to dynamics \eqref{eq:syslin} (such as those presented on Figure \ref{fi:plotOL}) do not succeed in reducing the covariance, which tends to confirm this non accessibility property. 

\subsection{Statistical linearization through partial feedback}

\subsubsection{Adding feedback into the model} \label{sse:accFB}

We have seen in the previous subsection that our method fails to provide an open-loop control that is satisfactory in terms of robustness. Nevertheless, some measurements, such as for position $r$ and velocity $v$, are available during the powered descent, and we can include those into a feedback control. Note that, on the one hand, measurements of mass are generally not available and, on the other hand, this quantity is in general not observable, which in turn hinders the design of feedback controls that are also functions of the mass. 

Therefore, we replace the control variable $u$ in \eqref{eq:dynVL} by a function $u_{FB}$ of $\overline x = (r, v)$ which depends on some appropriate parameters $\nu$, and in turn we consider $\nu$ as the new control variable. When including this partial feedback controls, the launch vehicle unperturbed dynamics writes as
\begin{equation}  \label{eq:dynFB}
f_{FB} (x, \nu) = f(x, u_{FB}(\overline x, \nu)).
\end{equation}
 Keeping the same dispersion term $g$ as in~\eqref{eq:trueSDE1}, we obtain a new stochastic model for the dynamics,
\begin{equation} \label{eq:trueSDE2}
    \mathrm{d}x_t = f_{FB}(x_t,\nu(t)) \, \mathrm{d}t + \G \, \mathrm{d}W_t.
\end{equation}
Returning to the physics of the problem, let us remark that the control $u$ is provided by two actuators, one for the norm of $u$, the other one for its direction, so that 
\begin{equation} \label{eq:urt}
u = u_\rho\begin{pmatrix} \cos(u_\theta) \\ \sin(u_\theta)\end{pmatrix},
\end{equation}
where $u_\rho = \| u\|$  and $u_\theta \in [-\pi, \pi)$. 
Restricting ourselves to linear functions of $\overline{x}$, we write the feedback control as
\begin{equation}\label{eq:uFB}
    u_{FB}(x,\nu) = (\rho + K_n \overline{x})\begin{pmatrix} \cos(\theta + K_d \overline{x}) \\ \sin(\theta + K_d \overline{x})\end{pmatrix}
\end{equation}
where  $\rho, \theta \in \R$, $K_n,K_d \in \R^4$, so that $\nu = (\rho, \theta, K_n, K_d)$.

Note that the statistical linearization of the system defined by \eqref{eq:trueSDE2} writes as
\begin{equation} \label{eq:syslinFB}
\left\{ \begin{array}{ll}
\dot{\hx} &= f_{FB}(\hx,\nu),\\[2mm]
\dot{P} &= D_xf_{FB}(\hx,\nu) P + P D_xf_{FB}(\hx,\nu)^{\top} + \G \G^{\top},
\end{array} \right. 
\end{equation}
and we checked with the help of a formal calculation software that, unlike in the case of the statistical linearization of~\eqref{eq:trueSDE1}, this new system satisfies the sufficient condition for accessibility, i.e.\ that, denoting by $\f_{FB}$ the family of all vector fields $f_{FB}(\cdot, \nu)$, there holds
\begin{equation}
\dim \left(\Vect{\vecdef{f(x)}{Df(x) + Df(x)^{\top}} \ : \  f \in \I(\f_{FB})}\right) = 20 = n + \frac{n(n+1)}{2}, 
\end{equation}
for any $x$ in $\R^n$, with $n = 5$.

\subsubsection{Adding actuator limits to the model}\label{sse:satmod}

In the dynamics~\eqref{eq:trueSDE2}, the control of the actuators $u_{FB}(x_t, \nu(t))$ becomes a stochastic process, therefore imposing $u_{min} \leq \| u_{FB} \| \leq u_{max}$ is not possible. In turn, the physical limit of the actuators have to be taken into account differently in the model, and we propose two independent ways to handle them.

\paragraph{First approach: saturation modelling}
The first approach consists in encoding the physical limits of the actuators directly in the dynamic model. This can be achieved by using a saturation function in the model, which can be given as follows: for real numbers $a<b$, consider a function $\mathrm{sat}_{a}^{b}$ such that $\mathrm{sat}_{a}^{b}(s) \in [a,b]$ for every $s \in \R$.
Then, using the notation of the control introduced by \eqref{eq:urt}, we replace the original unperturbed dynamics $f(x,u)$ by
\begin{equation}\label{eq:dynsat}
f^{\mathrm{sat}}(x,u) = f\left(x, \mathrm{sat}_{u_{min}}^{u_{max}}(u_\rho)\begin{pmatrix} \cos( u_\theta) \\ \sin( u_\theta) \end{pmatrix}\right),
\end{equation}
and then replace $f_{FB} (x, \nu) = f(x, u_{FB}(x, \nu))$ in~\eqref{eq:trueSDE2} by 
$$
f^{\mathrm{sat}}_{FB} (x, \nu) =f^{\mathrm{sat}}(x, u_{FB}(x, \nu)).
$$

Assuming that $\mathrm{sat}_{a}^{b}$ is a smooth function, we can compute the statistical linearization of the saturated feedback dynamics as 
\begin{equation} \label{eq:syslinsat}
\begin{cases}
\dot{\hx} &= f^{\mathrm{sat}}_{FB} (\hx, \nu),\\[2mm]
\dot{P} &= D_xf^{\mathrm{sat}}_{FB} (\hx, \nu) P + P D_xf^{\mathrm{sat}}_{FB} (\hx, \nu)^{\top} + \G(m) \G(m)^{\top}.
\end{cases}
\end{equation}

Following our previous discussion, we add the aforementioned control constraints to Problem \ref{pb:stat_lin}, obtaining the following robust motion planning problem.

\begin{prob}[Saturated closed-loop robust motion planning]
\label{pb:VLscoc}
\[
\min \limits_{\nu, t_f} \quad -m_{\mu}(t_f) + \mathrm{tr} (Q_fP(t_f)) + \int_0^{t_f} \mathrm{tr} (Q P(t)) dt
\]
under the constraints
\[
\begin{cases}
	\begin{array}{l}
			(\hx,P)(\cdot) \ \text{follows \eqref{eq:syslinsat}}, \\
			(\hx,P)(0) = (\hx^0,P^0), \\
			\hx(t_f) \in \Sf. \\
	\end{array}
\end{cases}
\]
\end{prob}

\begin{rem}
Note that the smoothness of $\mathrm{sat}_{a}^{b}$ is required in order to use statistical linearization. Different possibilities exist for the choice of a smooth saturation function, e.g., the hyperbolic tangent function or the exact (non smooth) saturation function defined by $\overline{\mathrm{sat}}_{a}^{b}(s)=s$ if $s \in [a,b]$, $=a$ if $s\leq a$, and $=b$ if $s\geq b$ (we use this latter in our simulation). Note that this function can be written as
\begin{equation}\label{eq:sat2}
   \overline{\mathrm{sat}}_{a}^{b}(s) = \frac{b + a}{2} + \frac{|s - a| - |s - b  |}{2}.
\end{equation}
We then choose a small parameter $\epsilon$ and define $\mathrm{sat}_{a}^{b}$ by replacing the absolute values $|s|$ by $\sqrt{s^2 + \epsilon^2}$ in the expression above. The smaller $\epsilon$ is, the closer the approximation to the exact saturation function is (see Figure \ref{fi:plotAprox}). 
\begin{figure}[] 
\centering
\begin{tikzpicture}[scale=1, baseline]
\pgfplotssetlayers
\pgfplotsset{every axis/.append style={
after end axis/.code={
};
}}
\begin{axis}[%
 	mark size = 1pt,
    width= 7 cm,
    height = 5 cm,
	grid = major,
	xlabel = $u$,
	xticklabels={,,},
	yticklabels={,,},
	xmin =-0.5,
	xmax = 1.5,
	xtick       = {0.2,0.8},
    xticklabels = {$u_{min}$,$u_{max}$},
    ytick       = {0.2,0.8},
    yticklabels = {$u_{min}$,$u_{max}$},
	ylabel = $\text{sat}_{u_{min}}^{u_{max}}(u)$,	
	title = {Saturation function for $u_{min} = 0.2$ and $u_{max} = 0.8$},
	legend style={ 
	       at={(0.5,-0.7)},
                    anchor=south,
                    legend columns= 2,
                        }
]

\def \umax{0.8}
\def \umin{0.2}

\def \e{0}
\addlegendentry{ Exact}
\draw[ thick, black] (-0.5, 0.2) -- (\umin, \umin);
\addplot[thick, black] coordinates {(\umin, \umin) (\umax, \umax)};
\draw[ thick, black] (\umax, \umax) -- (2, \umax);
\def \e{0.2}
\addlegendentry{ $\epsilon = 0.2$}
\addplot [ thick, red, samples =200]   { (\umax + \umin)/2 + (sqrt((\x- \umin)^2 + (\e)^2) - sqrt((\umax - \x)^2 + (\e)^2 ))/2}  ;

\def \e{0.1}
\addlegendentry{  $\epsilon = 0.1$}
\addplot [ thick, blue, samples =200]  { (\umax + \umin)/2 + (sqrt((\x- \umin)^2 + (\e)^2) - sqrt((\umax - \x)^2 + (\e)^2 ))/2} ;
\def \e{0.01}
\addlegendentry{$\epsilon = 0.01$}
\addplot [ thick, black!30!green, samples =200]  { (\umax + \umin)/2 + (sqrt((\x- \umin)^2 + (\e)^2) - sqrt((\umax - \x)^2 + (\e)^2 ))/2}  ;

\end{axis}
\end{tikzpicture}
\captionsetup{justification=centering} 
\captionof{figure}{The saturation function: exact value and smooth approximations}
\label{fi:plotAprox}
\end{figure}
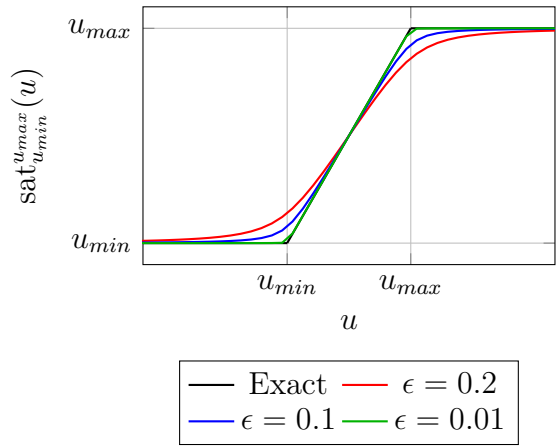
\end{rem}

\paragraph{Second approach: chance constraints}
The second approach consists in requiring that the inequality constraint \eqref{eq:ctenorm} be satisfied with a probability greater than a certain given threshold. Let us explain why such chance constraint is well-suited for the statistical linearization approach. Indeed, consider a chance constraint such as
\begin{equation} \label{eq:probaCte}
    \Pr[a^{\top}x_t \leq c] \geq p,
\end{equation}
where $a \in \R^5$, $c \in \R$, and $p \in (0,1)$ is the chosen probability threshold. In this formula the probability $\Pr$ is supposed to be computed with respect to the probability distribution of the process $x_t$ solution of~\eqref{eq:trueSDE2}. Now, the main principle underneath the method of statistical linearization consists of approximating the distribution of $x_t$ by rather a normal distribution whose mean and covariance are the state variables $\mLin(t)$ and $\PLin(t)$ of the statistical linearization~\eqref{eq:syslinFB} (see \cite{Berret20202} for explanations on this approximation). For such a distribution the chance constraint~\eqref{eq:probaCte} writes as
\begin{equation}
    a^{\top}\mLin+ \Psi^{-1}(p)\sqrt{a^{\top}\PLin a} \leq c,
\end{equation}
where $\Psi^{-1}$ is the inverse cumulative distribution function of the normal distribution. Thus, the chance constraint on the stochastic process is transformed into a state constraint in the setting of the statistical linearization.

Following this approach, we reformulate the norm constraint on $u_{FB}$ as the following chance constraint on the solutions of \eqref{eq:trueSDE2}: for every $t \in[0, t_f]$, there holds
\begin{equation}\label{eq:ctenormPb} 
\Pr(  \|u_{FB}(x_t,\nu)\| \in [u_{min}, u_{max}] )= \Pr(  (\rho + K_n \overline{x}_t) \in [u_{min}, u_{max}] ) \geq p,
\end{equation}
where $p$ is the chosen threshold. By replacing these constraints by state constraints on the statistical linearization, we finally formulate the following robust motion planning problem.

%
%

\begin{prob}[Chance constrained robust motion planning]\label{pb:mpProba}
\[
\min \limits_{\nu, t_f} \quad -\mLin_{\mu}(t_f) + \mathrm{tr} (Q_f\PLin(t_f)) + \int_0^{t_f} \mathrm{tr} (Q \PLin(t)) dt
\]
under the following constraints:
\begin{itemize}
  \item $(\hx,P)(\cdot)$ is solution of~\eqref{eq:syslinFB} with $(\hx,P)(0) = (\hx^0,P^0)$;
  \item $\forall t \in[0, t_f]$ there holds
$$
\left\{
\begin{array}{l}
\displaystyle u_{max} -(\rho(t) + K_n(t) \overline{\hx}(t))  \geq \Psi^{-1}(p) \sqrt{K_n(t) \overline{P}(t) K_n(t)^{\top}}, \\[2mm]
\displaystyle	        (\rho(t) + K_n(t) \overline{\hx}(t) ) - u_{min} \geq \Psi^{-1}(p) \sqrt{K_n(t) \overline{P}(t) K_n(t)^{\top}},
\end{array} \right.
$$
where $\overline \mLin$ denotes the first four components of $\mLin$ and  $\overline \PLin$  the matrix formed by the first four lines and columns of $\PLin$;
  \item $\hx(t_f) \in \Sf$.
\end{itemize}
\end{prob}

Note that this formulation may be numerically expensive to handle because additional mixed constraints on both mean and covariance are considered.

\begin{rem}
The solution $\nu(\cdot)$ of the above problem guarantees that the constraint $u_{min} \leq \| u_{FB} \| \leq u_{max}$ is satisfied only in probability. Thus when using this control law $\nu(\cdot)$ for simulating the stochastic model~\eqref{eq:trueSDE2}, one has to model the system including saturation (such as the function $f^{\mathrm{sat}}_{FB}$ defined in the first approach) to force the physical limits of the actuator. 
\end{rem}

\subsection{Numerical results}\label{sse:simus}

This section presents some numerical results to illustrate solutions of Problems \ref{pb:mpSV}--\ref{pb:mpProba}. Calculations are based on a direct method which makes use of a time discretization of the considered optimal control problems, using a grid of $150$ nodes and the CasADi toolbox \cite{Andersson2019} (combined with the IPOPT solver). The computed open-loop optimal control is then used as an input to simulate the stochastic dynamic model with random uncertainties. When performing the simulations, actuator limits are forced by applying a saturation on the input control following the exact expression \eqref{eq:sat2}.

\begin{figure}

\begin{tabular}{| c c |}
 \hline
 Property & Value \\ 
 \hline
 $T$ & $1e6N$ \\  
 $u_{min}$ & 0.2   \\
 $u_{max}$ & 0.8       \\
 $q$ &$300 kgs^{-1}$\\
 $g_0$ & $9.81ms^{-2}$\\
 $r^0$ & $(1000, 4000)m $ \\  
 $v^0$ & $(-75, -200) ms^{-1}$ \\
 $\mu^0$ & $40000 kg$    \\
 \hline
\end{tabular} \hspace{0.5cm}
\begin{tabular}{| c c |}
 \hline
 Property & Value  \\ 
 \hline
  $g$ & $\text{diag}(0, 0, 100, 10, 0)N $ \\
 $P^0$ & $\text{diag}(100m^2, 100m^2, $ \\
 & $ 1m^2s^{-2}, 1m^2s^{-2}, 1600kg^2)$ \\
 $Q$ & $\text{diag}(10, 50, 1, 10, 0).10^3$ \\
 $Q_f$ & $\text{diag}(14, 20, 0.2, 4, 0).10^3$ \\
 \hline
\end{tabular}

\captionof{table}{Simulation settings}
\label{ta:params}
\end{figure}

The parameters in the dynamics \eqref{eq:dynVL} and the simulation settings are specified in Table \ref{ta:params}. The initial mean state is given by $(r^0 , v^0, m^0)$. We assume that measurements of the initial position and velocity are quite accurate, while the initial mass is imprecisely known, hence the setting of $P^0$. Along the trajectory, the dynamics is subject to perturbations due to aerodynamic effects, modeled by white noises of standard deviation depending on the mass $\G = \frac{\sigma}{\m}$. Moreover, the nonnegative symmetric matrices $Q$ and $Q_f$ are fixed to the same value for the three considered problems. We do not penalize the mass covariance, since position and velocity are the quantities of interest to attain precision landing and ensure security.

Figure \ref{fi:plotOL} shows Monte-Carlo samples of trajectories solutions to dynamics \eqref{eq:trueSDE1} which are generated by the control solution of Problem \ref{pb:mpSV}. The right plot shows the norm $||u||$ of the optimal control, which takes a Min-Max form. The left plot shows simulated trajectories. We clearly observe that the covariance in position increases with time although it is strongly penalized in the cost, which tends to confirm that there is a lack of accessibility using this formulation. The final state standard deviation is $\overline{P}(t_f) = (29.7m, 58.6m, 1.3ms^{-1}, 5.2ms^{-1})$, which is indeed greater than the initial covariance. Note that the final time $t_f$ is $26.4s$.

\begin{figure}[h!]
    \centering
    \includegraphics[width = 12cm]{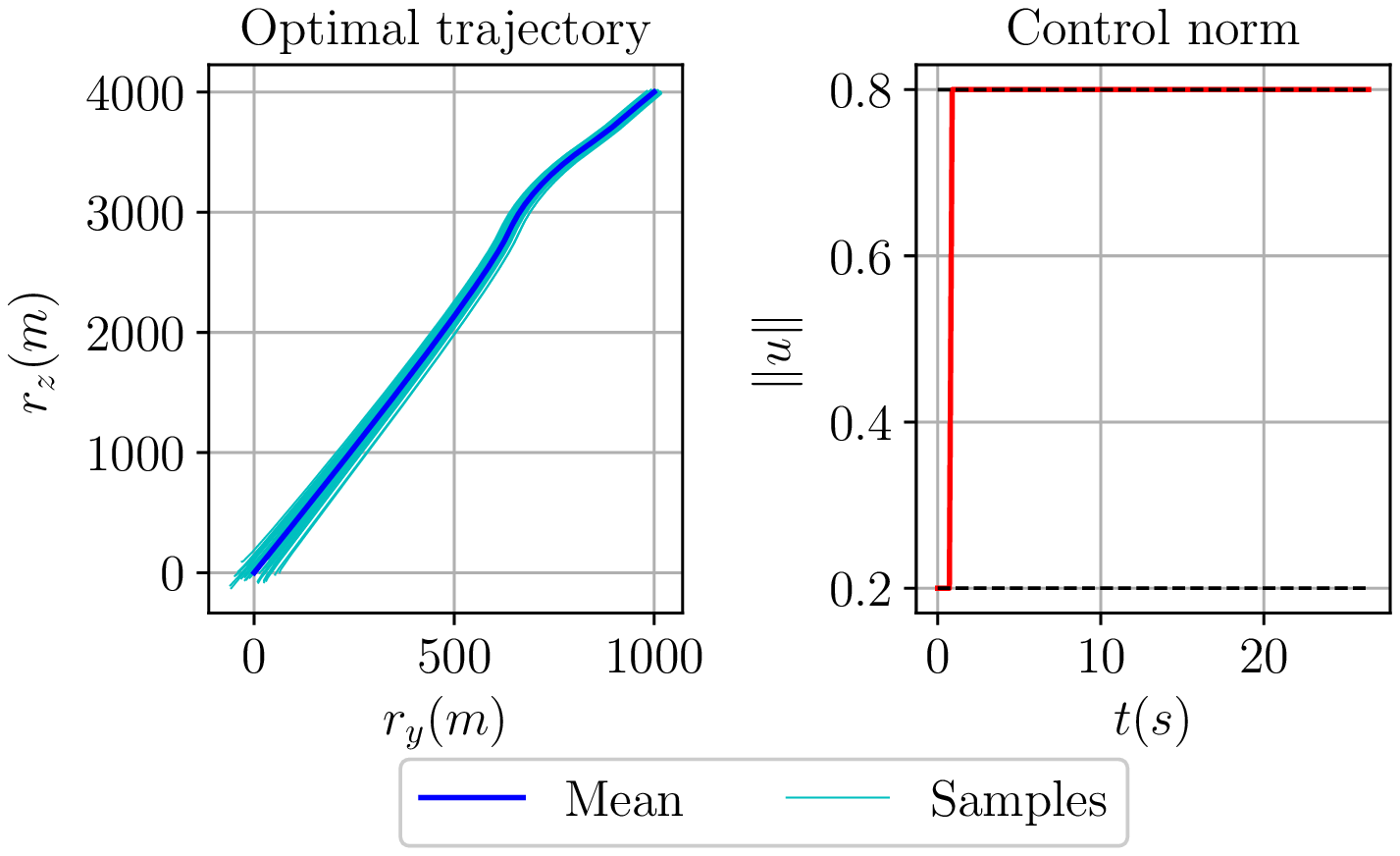}
\captionof{figure}{Simulations of random trajectories minimizing the covariance, with a deterministic control}
\label{fi:plotOL}
\end{figure}


Figure \ref{fi:plotProba} shows trajectories solutions of dynamics \eqref{eq:trueSDE2}, which are generated by the control solution of Problem \ref{pb:mpProba} with $p = 0.99$. The feedback gains are penalized within the cost to regularize the problem:
\begin{equation} \label{eq:costCL}
\min \limits_{\nu, t_f} \quad -\hx_\m(t_f) + \text{tr}(Q_fP(t_f)) + \int_0^{t_f} \text{tr}(Q\PLin(t)) + 2\|K_n(t)\|^2 + \|K_d(t) \|^2  dt .
\end{equation}
The shape of the control norm $\| u \|$ looks like a Max-Min-Max control to which margins would have been added with respect to the real actuator limits, $u_{min}$ and $u_{max}$. 
We observe that the dispersion is controlled all along the trajectory, as required. The final state standard deviation is $\overline{P}(t_f) = (5.2m, 5.8m, 0.5m/s,$ $ 0.5m/s)$, which is smaller than the initial covariance. The final time is $34.4s$, which is much higher than the final time of the previous setting. This can be explained as follows. Without the accessibility property, reducing the final time represents the only way to minimize the final covariance. On the contrary, thanks to 
partial feedback the control may be kept closer to the fuel-optimal solution while providing robustness.

\begin{figure}[h!]
    \centering
    \includegraphics[width = 12cm]{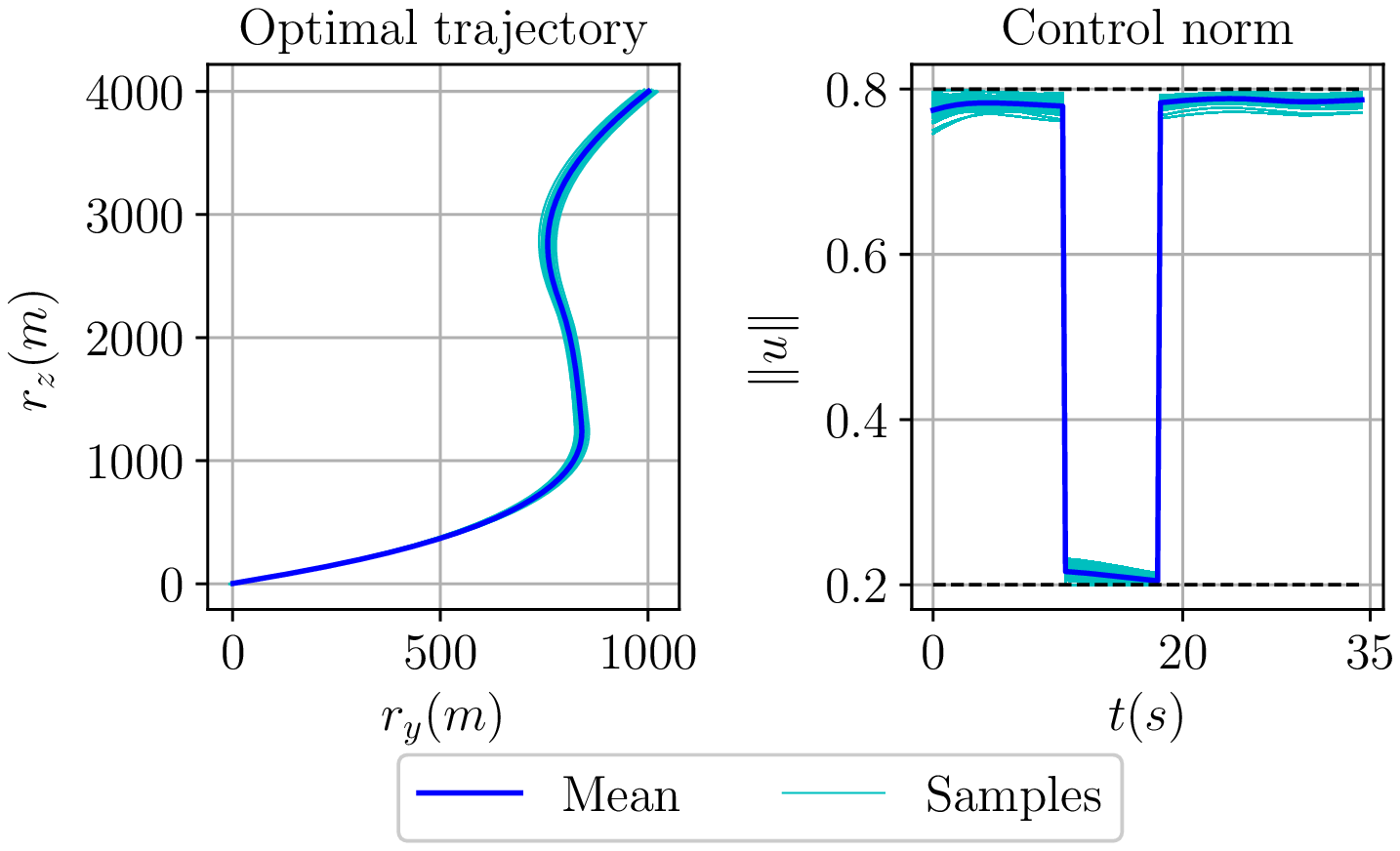}
\captionof{figure}{Simulations of random trajectories minimizing the covariance, with feedback dynamics and chance constraints}
\label{fi:plotProba}
\end{figure}

Figure \ref{fi:plotSat} shows trajectories solutions of dynamics \eqref{eq:trueSDE2}, which are generated by the control solution of Problem \ref{pb:VLscoc}, with partial feedback and actuator limits treated as a saturation in the model. The smoothing parameter $\epsilon$ is set to $0.02$. The cost is still expressed by \eqref{eq:costCL}.
The final state standard deviation is $\overline{P}(t_f)=(4.7m, 4.7m, 0.4m/s,$ $ 0.7m/s),$ and its norm is even smaller than the norm of the solution which stems from the chance constraint formulation. The final time ($34.7s$) is very close to the one obtained with chance constraints. Importantly, we note that the controls obtained with the two feedback methods are very similar in shape and duration, except that the control obtained under smooth saturation approximation is smoother, yielding larger margins on the norm of the control $||u||$.


\begin{figure}[h!]
    \centering
    \includegraphics[width = 12cm]{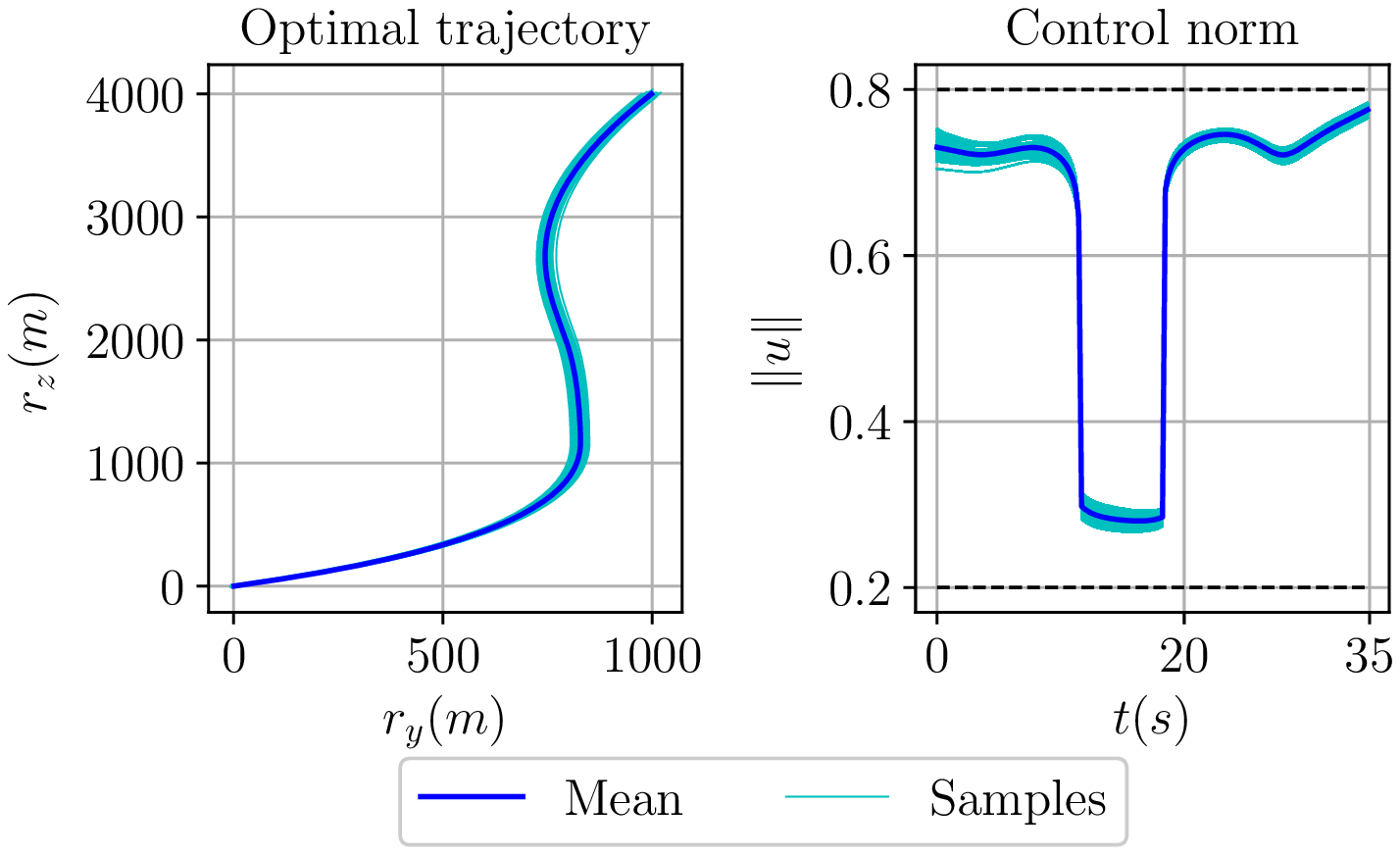}
\captionof{figure}{Simulations of random trajectories minimizing the covariance, with smoothly saturated feedback dynamics}
\label{fi:plotSat}
\end{figure}

Finally, the fourth set of plots (Figure \ref{fi:plotErr}) shows the relative error stemming from statistical linearization. Those are obtained when simulating the saturated feedback linearized dynamics \eqref{eq:syslinsat} using the control solution of Problem \ref{pb:VLscoc}. Approximation of the true mean and covariance are obtained via Monte-Carlo estimates with $1000$ sample trajectories solutions to dynamics \eqref{eq:trueSDE2}. Thus, since the errors on both mean and covariance remain bounded, we deduce that estimates of the statistical linearization are consistent. 

\begin{figure}[h!]
    \centering
    \includegraphics[width = 12cm]{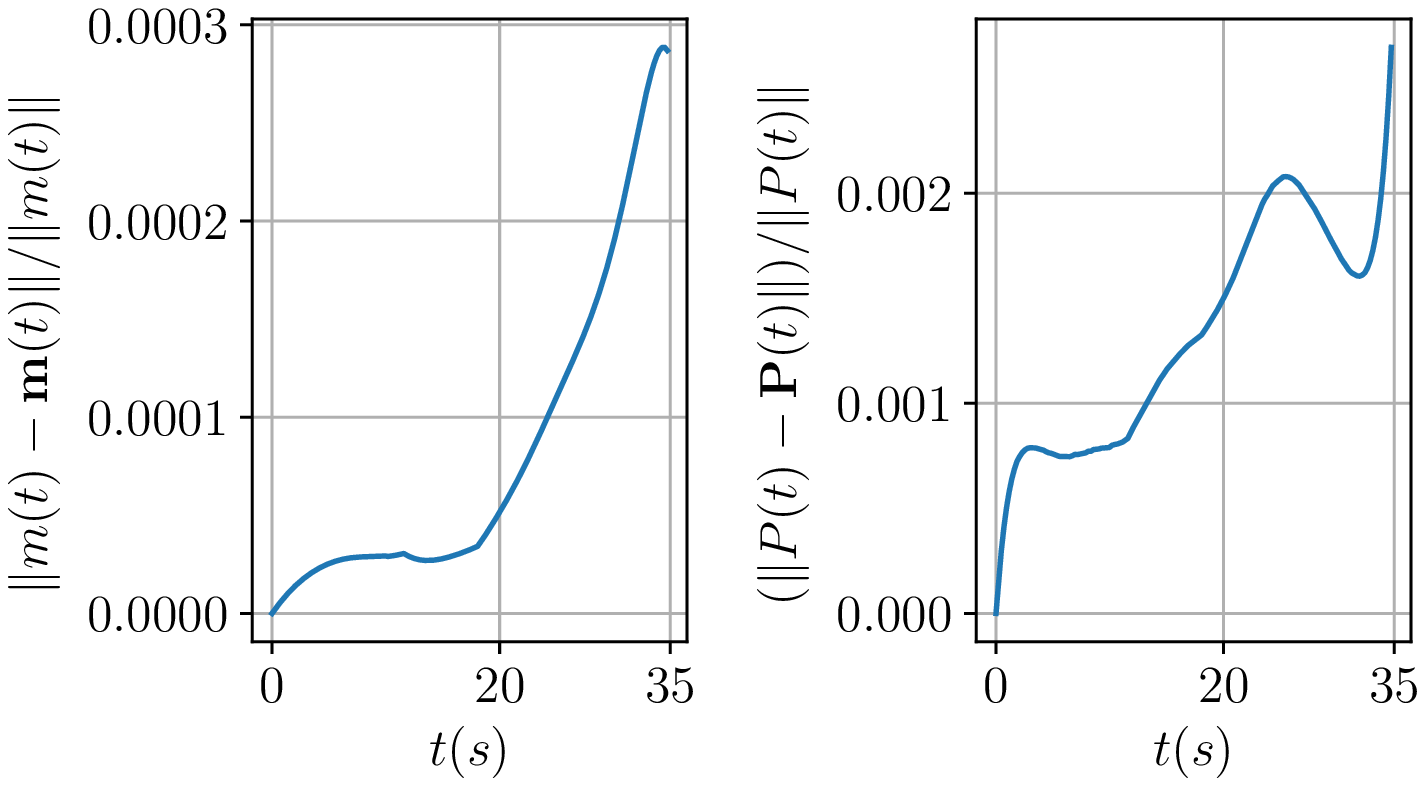}
\captionof{figure}{Relative errors on statistical linearization estimates of the mean and the covariance}
\label{fi:plotErr}
\end{figure}







\section{Conclusion and perspectives}
\label{se:conclusion}
We presented a method for stochastic robust motion planning which leverages statistical linearization to approximate the original formulation with a deterministic optimal control problem on the mean and the covariance of the original state variables. We justify our work through appropriate theoretical bounds for the approximation error due to statistical linearization, and through numerical experiments on the powered descent of a space vehicle.

We suggest three main future research directions. First, we will investigate extensions of our theoretical bounds for the approximation error due to statistical linearization to more general settings, e.g., to stochastic systems whose diffusion explicitly depends on the state variables. Second, we will consider extending our controllability results in Section \ref{sec:contr} to settings which go beyond control-linear systems; one possible direct application of this latter result would encompass controllability of stochastic differential equations. Third, we will focus on applying our approach to other applications, such as stochastic robust motion planning of autonomous vehicles and space robots. 

\bibliographystyle{abbrv}
\bibliography{main.bib}
\end{document}